\newtheorem{theorem}{Theorem}[section]
\newtheorem{lemma}[theorem]{Lemma}
\newtheorem{corollary}[theorem]{Corollary}
\newtheorem{proposition}[theorem]{Proposition}
\theoremstyle{definition}
\newtheorem{example}[theorem]{Example}
\theoremstyle{remark}
\newtheorem{remark}[theorem]{Remark}
\newtheorem{conjecture}[theorem]{Conjecture}
\numberwithin{equation}{section}
\newcommand{\calD}{\mathcal{D}}
\newcommand{\calI}{\mathcal{I}}
\newcommand{\calN}{\mathcal{N}}
\newcommand{\calO}{\mathcal{O}}
\newcommand{\calQ}{\mathcal{Q}}
\newcommand{\calS}{\mathcal{S}}
\newcommand{\calZ}{\mathcal{Z}}
\newcommand{\be}{\mathcal{\begin{equation}}}
\newcommand{\ee}{\mathcal{\end{equation}}}
\newcommand{\bbH}{\mathbb{H}}
\newcommand{\bbS}{\mathbb{S}}
\newcommand{\bbF}{\mathbb{F}}
\newcommand{\bbC}{\mathbb{C}}
\newcommand{\bbN}{\mathbb{N}}
\newcommand{\bbP}{\mathbb{P}}
\newcommand{\bbQ}{\mathbb{Q}}
\newcommand{\bbR}{\mathbb{R}}
\newcommand{\bbZ}{\mathbb{Z}}
\newcommand{\bfe}{\mathbf{e}}
\newcommand{\bfS}{\mathbf{S}}
\newcommand{\frakS}{\mathfrak{S}}
\newcommand{\bfO}{\mathbf{O}}
\newcommand{\Or}{\textup{O}}
\newcommand{\SL}{\textup{SL}}
\newcommand{\SO}{\textup{SO}}
\newcommand{\Ker}{\textup{Ker}}
\newcommand{\PGL}{\textup{PGL}}
\newcommand{\PSL}{\textup{PSL}}
\newcommand{\Sp}{\textup{Sp}}
\newcommand{\Proj}{\textup{Proj}}
\newcommand{\Jac}{\textup{Jac}}
\newcommand{\Sym}{\textup{Sym}}
\newcommand{\la}{\langle}
\newcommand{\ra}{\rangle}
\newcommand{\half}{\tfrac{1}{2}}
\newcommand{\discr}{\textup{discr}}
\newcommand{\PO}{\textup{PO}}
\newcommand{\Cay}{\textup{Cay}}
\newcommand{\beq}{\begin{equation}}
\newcommand{\eeq}{\end{equation}}
\date{}
\title{Configuration spaces of complex and real spheres}
\author{Igor Dolgachev}
\address{Department of Mathematics, University of Michigan, 525 E. University Av., Ann Arbor, MI, 49109}
\email{idolga@umich.edu}
\dedicatory{To Rob Lazarsfeld on the occasion of his 60th birthday}
\author{Benjamin  Howard}
\address{Center for Communications Research, Institute for Defense Analysis, 805 Bunn Dr., Princeton, NJ, 08540}
\email{bjhowa3@idaccr.org}
\begin{document}

\begin{abstract} We study the GIT-quotient of the Cartesian power of projective space modulo the projective orthogonal group. A classical isomorphism of this group with the Inversive group of birational transformations of the projective space of one dimension less allows one to interpret these spaces as configuration spaces of complex or real spheres. 
\end{abstract}

\maketitle

\maketitle
\section{Introduction}
In this paper we study the moduli space of configurations of points in complex projective space with respect to the group of projective transformations leaving invariant a non-degenerate quadric. More precisely, if $\bbP^n = \bbP(V)$ denotes the projective space of lines in  a linear complex space $V$ equipped with a non-degenerate symmetric form $\la v,w\ra$,  we study the GIT-quotient 
$$\bfO_n^m := \bbP(V)^m/\!/\PO(V) = \Proj \bigl(\bigoplus_{d=0}^\infty H^0(\bbP(V)^m,\calO_{\bbP(V)}(d)^{\boxtimes m}\bigr)^{\Or(V)}$$
$$ \cong \Proj \bigl(\bigoplus_{d=0}^\infty (S^d(V^*)^{\otimes m}\bigr)^{\Or(V)}.
$$
If $m \geq n+1 = \dim~V$ then generic point configurations have zero dimensional isotropy subgroups in $\Or(V)$, and since $\dim \Or(n+1) = \frac{1}{2}n(n+1)$ we expect that $\dim \bfO_n^m = mn-\frac{1}{2}n(n+1)$.

Let 
$$R(n;m)  = \bigoplus_{d=0}^\infty (S^d(V^*)^{\otimes m}\bigr)^{\Or(V)}.$$
It is a finitely generated graded algebra with graded part $R(n;m)_d$ of degree $d$ equal to $(S^d(V^*)^{\otimes m}\bigr)^{\Or(V)}$. After polarization, $R(n;m)_d $ becomes isomorphic to the linear space $\bbC[V^{m}]_{d,\ldots,d}^{\Or(V)}$ of $\Or(V)$-invariant polynomials on $V^m$ which are homogeneous of degree $d$ in each  vector variable.  The First Fundamental Theorem of Invariant theory  (FFT) for the orthogonal group \cite{Weyl}, Chapter 2, \S 9 asserts that $\bbC[V^{m}]^{\Or(V)}$ is generated by the bracket-functions $[ij]:(v_1,\ldots, v_m) \mapsto \la v_i,v_j\ra$. Using this theorem, our first result is the following.

\begin{theorem} Let $\Sym_{m}$ be the space of symmetric matrices of size $m$ with the torus $T^{m-1} = \{(z_1,\ldots,z_{m})\in (\bbC^*)^{m}:z_1\cdots z_{m} = 1\}$ acting by scaling each $i$-th row and $i$-th column by  $z_i$. Let $\bbS_{m}$ be the toric variety $\bbP(\Sym_{m})/\!/T^{m-1}$. Then $\bfO_n^m$ is isomorphic to a closed subvariety of $\bbS_{m}$ defined by the rank condition $r\le n+1$. 
\end{theorem}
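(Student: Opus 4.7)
The plan is to realize $\bfO_n^m$ via the Gram map
\[
\mu : V^m \to \Sym_m, \qquad (v_1,\ldots,v_m) \mapsto \bigl(\la v_i, v_j\ra\bigr)_{i,j},
\]
and then descend by the diagonal-scaling torus. Pullback sends the coordinate $M_{ij}$ on $\Sym_m$ to the bracket-function $[ij]$, so by the FFT recalled above, $\mu^* : \bbC[\Sym_m] \to \bbC[V^m]^{\Or(V)}$ is surjective. First I would invoke the Second Fundamental Theorem for the orthogonal group, which identifies the kernel of $\mu^*$ with the ideal $I_{n+2}$ generated by the $(n+2)\times(n+2)$ minors of the generic symmetric matrix. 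This yields a ring isomorphism
\[
\bbC[\Sym_m]/I_{n+2} \;\xrightarrow{\sim}\; \bbC[V^m]^{\Or(V)},
\]
identifying $\Spec \bbC[V^m]^{\Or(V)}$ with the determinantal variety $\Sym_m^{\le n+1}$ of symmetric matrices of rank at most $n+1 = \dim V$.

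Next I would check torus equivariance. The torus $T^m = (\bbC^*)^m$ acts on $V^m$ by independent scaling of each factor and on $\Sym_m$ by $M_{ij} \mapsto t_i t_j M_{ij}$; the Gram map is manifestly $T^m$-equivariant, and since the $T^m$-action commutes with $\Or(V)$, so is the ring isomorphism above. A character $(d_1,\ldots,d_m)\in\bbZ^m$ is trivial on $T^{m-1} = \{\prod t_i = 1\}$ iff $d_1 = \cdots = d_m$, so taking $T^{m-1}$-invariants on the left picks out exactly the multi-homogeneous pieces $S^d(V^*)^{\otimes m}$ for all $d$ and produces $R(n;m)$. Taking $T^{m-1}$-invariants on the right then gives
\[
R(n;m) \;\cong\; \bigl(\bbC[\Sym_m]/I_{n+2}\bigr)^{T^{m-1}}.
\]
Applying $\Proj$, we realize $\bfO_n^m = \Proj R(n;m)$ as the closed subvariety of $\bbS_m = \Proj \bbC[\Sym_m]^{T^{m-1}}$ cut out by the $T^{m-1}$-invariant image of $I_{n+2}$, i.e., by the rank condition $r \le n+1$ — a condition well-defined on $\bbS_m$ since rank is preserved under the row/column rescaling $M_{ij} \mapsto t_i t_j M_{ij}$.

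The main technical nuisance will be bookkeeping with the two natural gradings: a homogeneous element of $R(n;m)_d$ corresponds under $\mu^*$ to a polynomial in the $M_{ij}$'s of total matrix-entry degree $md/2$, since each $[ij]$ contributes $1$ to rows $i$ and $j$. Thus the ring isomorphism above differs from a grading-preserving one by a Veronese re-indexing, which is harmless for $\Proj$ but must be stated carefully; it also explains why $R(n;m)_d$ must vanish when $md$ is odd. The one non-routine input is the SFT for $\Or(V)$, which is also what guarantees that the embedding $\bfO_n^m \hookrightarrow \bbS_m$ is scheme-theoretic and not merely set-theoretic.
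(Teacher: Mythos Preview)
Your proposal is correct and follows essentially the same approach as the paper: both arguments set up the Gram map $V^m\to\Sym_m$, invoke the FFT to get surjectivity onto $\bbC[V^m]^{\Or(V)}$, the SFT to identify the kernel with the determinantal ideal of $(n+2)$-minors, and then pass to $T^{m-1}$-invariants using equivariance of the Gram map. Your explicit remarks on the Veronese re-indexing of the grading and on the need for reductivity to pass from $(\bbC[\Sym_m]/I_{n+2})^{T^{m-1}}$ to $\bbC[\Sym_m]^{T^{m-1}}/(I_{n+2}\cap\bbC[\Sym_m]^{T^{m-1}})$ make points the paper leaves implicit, but the underlying argument is the same.
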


For example, when $m \leq n+1$, we obtain that $\bfO_{n}^{m}$ is a toric variety of dimension $\frac{1}{2}n(n+1)$.

The varieties $\bfO_1^m$ are special since the connected component of the identity of $\Or(2)$ is isomorphic to $\SO(2) \cong \bbC^*$. This implies that $\bfO_1^m$ admits a double cover isomorphic to a toric variety $(\bbP^1)^m/\!/\SO(2)$.  We compare this variety with the toric variety $X(A_{m-1})$ associated to the root system of type $A_{m-1}$ (see \cite{DL}, \cite{Procesi1}). The variety $X(A_{m-1})$ admits a natural involution defined by the standard Cremona transformation of $\bbP^{m-1}$ and the quotient by this involution is a generalized Cayley 4-nodal cubic surface $\Cay_{m-1}$ (equal to the Cayley cubic surface if $m = 3$). We prove that $\bfO_1^m$ is isomorphic to $\Cay_{m-1}$ for odd $m$ and equal to some blow-down of $\Cay_{m-1}$ when $m$ is even.

The main geometric motivation for our work is the study of configuration spaces of complex and real spheres. It is known since F. Klein and S. Lie  that the Inversive group\footnote{Also called the Inversion group or the Laguerre group. It is a subgroup of the Cremona group of $\bbP^n$ generated by the projective affine orthogonal group $\textup{PAO}(n+1)$ and the inversion transformation $[x_0,\ldots,x_n]\mapsto [x_1^2+\ldots+x_n^2,x_0x_1,\ldots,x_0x_n]$.} defining the geometry of spheres  in dimension $n$ is isomorphic to the projective orthogonal group $\PO(n+1)$ (see, for example, \cite{Klein}, \S 25). Thus any problem about configurations of $m$ spheres in $\bbP^n$ is equivalent to the problem about configurations of $m$ points in $\bbP^{n+1}$ with respect to $\PO(n+1)$. The last two sections of the paper give some applications to the geometry of spheres.

\section{The First Fundamental Theorem of Invariant theory}

Let $V$ be an $n+1$-dimensional  quadratic complex vector space, i.e. a vector space together with a nondegenerate symmetric bilinear form whose values we denote by $\la v,w \ra$. Let $G = \Or(V)$ be the  orthogonal group of $V$ and  $\PO(V) = \Or(V)/\{\pm I\}$. Consider the diagonal action of  $G$ on $V^m$. The First Fundamental Theorem of Invariant theory for the orthogonal group (see \cite{Procesi}, Chapter 11, 2.1, \cite{Weyl}, Chapter 2, \S 9) asserts that any $G$-invariant polynomial function on $V^m$ is a polynomial in the bracket-functions 
$$[ij]:V^{m} \to \bbC, \ (v_1,\ldots,v_m)\mapsto  \la v_i,v_j\ra,  \quad 1\le i,j\le m.$$

The algebra of $G$-invariant polynomial functions $\bbC[V^m]^G$ has a natural multi-grading by $\bbN^m$ with homogeneous part $\bbC[V^m]_{(d_1,\ldots,d_m)}^G$ equal to the linear space  of polynomials which are homogeneous of degree $d_i$ in each  $i$-th vector variable. This grading corresponds to the natural action of the torus $\bbC^*{}^m$ by scaling the vectors in each factor. The $\bbN$-graded  ring $R(n;m)$ in which we are interested is the subring 
$\oplus_{d=0}^\infty \bbC[V^m]_{(d,\ldots,d)}^G$. 
We have
$$R(n;m) \cong \bbC[V^m]^{\Or(V)\times T},$$
where $T = \{(z_1,\ldots,z_m)\in \bbC^*{}^m:z_1\cdots z_m = 1\}$. 

Let $\Sym_m$ denote the linear space of complex symmetric $m\times m$-matrices.  If we view $V^{m}$ as the space of linear functions $L(\bbC^m,V)$, then we can define a quadratic map 
$$\Phi:V^{m}\to \Sym_m, \quad (v_1,\ldots,v_m)\mapsto (\la v_i,v_j\ra),$$
by composing 
$$\bbC^m\overset{\phi}{\longrightarrow} V \overset{b}{\longrightarrow}  V^*\overset{{}^t\phi}{\longrightarrow} (\bbC^m)^*,$$
where the middle map is defined by the symmetric bilinear form $b$ associated to $q$. It is easy to see that, considering the domain and the range of $\Phi$ as affine spaces over $\bbC$, the image of $\phi$ is the closed subvariety  $\Sym_m(n+1)\subset \Sym_m$ of symmetric matrices of rank $\le n+1$. Passing to the rings of regular functions, we get a homomorphism of rings
\beq\label{pol}
\Phi:\bbC[\Sym_m] \to \bbC[V^m].
\eeq
The map $\Phi$ is obviously $T$-equivariant if we make  $(z_1,\ldots,z_m)$ act by multiplying the entry $x_{ij}$ of a symmetric matrix by $z_iz_j$. By passing to invariants, we obtain a homomorphism of graded rings
\beq\label{pol1}
\Phi_T: \bbC[\Sym_m]^T\to \bbC[V^m]^{T}.
\eeq
The FTT can be restated by saying that the image of this homomorphism is equal to the ring $R(n;m)$. 

We identify $\bbC[\Sym_m]$ with the polynomial ring in entries $X_{ij}$ of a general symmetric  matrix $X = (X_{ij})$ of size $m\times m$. Note that the action of $(z_1,\ldots,z_m)\in (\bbC^*)^m$ on a symmetric matrix $(X_{ij})$ is by multiplying each entry $X_{ij}$ by $z_iz_j$.  The graded part $\bbC[\Sym_m]_d$ of $\bbC[\Sym_m]$ consists of functions which under this action are multiplied by $(z_1\cdots z_m)^d$. They are obviously contained in $\bbC[\Sym_m]^T$ and define the grading of the ring  $\bbC[\Sym_m]^T$. The homomorphism $\Phi^*$ is a homomorphism of  graded rings from  $\bbC[\Sym_m]^T$ to $R(n;m)$.


 Let 
$$\det X = \sum_{\sigma\in \frakS_m}\epsilon(\sigma)X_{\sigma(1)1}\cdots X_{\sigma(N)N}$$
be the determinant of $X$. The monomials  $d_\sigma = X_{\sigma(1)1}\cdots X_{\sigma(N)N}$ will be called the \emph{determinantal} terms.  Note that the number $k(m)$ of different determinantal terms is less than $m!$.  It was known since the 19th century (\cite{Salmon}, p. 46) that the generating function for the numbers $k(m)$ is equal to 
$$1+\sum_{m=1}^\infty \frac{1}{m!}k(m)t^m = \frac{e^{\frac{1}{2}t+\frac{1}{4}t^2}}{\sqrt{1-t}}.$$
For example, $k(3) = 5, k(4) = 17, k(5) = 73, k(6) = 338.$

Each permutation $\sigma$ decomposes into disjoint oriented cycles.
Consider the directed graph on $m$ vertices which consists of the oriented cycles in $\sigma$; 
i.e. we take a directed edge $i \rightarrow \sigma(i)$ for each vertex $i$.
Suppose there is a cycle $\tau$ in $\sigma$ of length $\geq 3$.  
Write $\sigma = \tau \upsilon = \upsilon \tau$, and define $\sigma' = \tau^{-1} \upsilon$.  
Since our matrix is symmetric, the determinantal term $d_{\sigma'}$ 
corresponding to  $\sigma'$ has the same value as $d_\sigma$, and furthermore $\sigma'$ has the same sign 
as $\sigma$ (so there is no canceling), and so we may drop the orientation on 
each cycle.  We may therefore envision the determinantal terms as 
$2$-regular un-directed graphs on $m$ vertices (where $2$-cycles and loops are admitted).
Thus for each $2$-regular graph having $k$ cycles of length $\geq 3$, there corresponds $2^k$ 
determinantal terms.

\begin{proposition}\label{P3.1} The ring $\oplus_{d=0}^\infty \bbC[\Sym_m]_{2d}^T$ is generated by the determinantal terms.
\end{proposition}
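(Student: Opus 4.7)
The plan is to recode monomials as multigraphs and appeal to Petersen's classical $2$-factor theorem.

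To each monomial $M = \prod_{i\le j}X_{ij}^{a_{ij}}$ in $\bbC[\Sym_m]$ I attach the multigraph $\Gamma(M)$ on vertex set $\{1,\ldots,m\}$ having $a_{ij}$ edges between $i$ and $j$ (for $i<j$) and $a_{ii}$ loops at $i$, with each loop counted of degree $2$. The $T$-action multiplies $M$ by $\prod_i z_i^{\deg_{\Gamma(M)}(i)}$, so the $T$-invariant monomials of $\bbN$-degree $d$ in $\bbC[\Sym_m]^T$ correspond exactly to the $d$-regular multigraphs on $m$ vertices. Since such monomials form a $\bbC$-basis of $\bbC[\Sym_m]^T$, the proposition reduces to showing that every $T$-invariant monomial of degree $2d$ is a product of $d$ determinantal terms.

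Unpacking $d_\sigma = \prod_k X_{\sigma(k),k}$ shows that $d_\sigma$ corresponds to the $2$-regular multigraph $F_\sigma$ whose edges are the pairs $\{k,\sigma(k)\}$: fixed points yield loops, transpositions $(i\ j)$ yield double edges, and cycles of length $\ell\ge 3$ yield ordinary $\ell$-cycles. Conversely, every $2$-regular multigraph on $m$ vertices equals $F_\sigma$ for some $\sigma$ (for $2^k$ choices of $\sigma$ when $F$ has $k$ cycles of length $\ge 3$, matching the count stated just before the proposition). A product of $d$ determinantal terms therefore encodes an edge-disjoint union of $d$ spanning $2$-regular subgraphs, i.e.\ a $2d$-regular multigraph with a chosen $2$-factor decomposition.

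The proposition thus reduces to the combinatorial assertion that every $2d$-regular multigraph on $m$ vertices (with loops counted of degree $2$) admits a decomposition into $d$ edge-disjoint spanning $2$-regular subgraphs. This is Petersen's $1891$ $2$-factor theorem for multigraphs with loops; I would prove it by induction on $d$, extracting one $2$-factor at a time. To exhibit a single $2$-factor I would pass to the bipartite double cover $\widetilde\Gamma$ on vertex set $\{v^\pm:v\in V(\Gamma)\}$ (with two parallel $\{v^+,v^-\}$ edges for each loop at $v$, and edges $\{u^+,v^-\},\{v^+,u^-\}$ for each non-loop edge $\{u,v\}$), apply K\"onig's theorem to extract a perfect matching, and descend. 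The main obstacle is this descent: $2$-cycles of the induced permutation must correspond to double edges realized by two distinct $\Gamma$-edges, and each loop of $\Gamma$ must have both of its $\widetilde\Gamma$-copies allocated together. These bookkeeping issues are handled by pairing a matching with its conjugate (swapping $v^+\leftrightarrow v^-$), or equivalently by the Eulerian-circuit formulation of Petersen, both of which extend transparently to multigraphs with loops.
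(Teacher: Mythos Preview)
Your proposal is correct and follows essentially the same route as the paper: encode monomials as multigraphs, identify the $T$-weight condition with regularity, identify determinantal terms with $2$-regular graphs, and invoke Petersen's $2$-factorization theorem. The paper simply cites Petersen rather than sketching a proof via the bipartite double cover and K\"onig's theorem, so your final paragraph is extra detail beyond what is needed.
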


\begin{proof} A monomial $X_{i_1j_1}\cdots X_{i_kj_k}$ belongs to $\bbC[\Sym_m]_{d}^T$ if and only if 
$$z_{i_1}\cdots z_{i_k}z_{j_1}\cdots z_{j_k} = (z_1\cdots z_m)^d$$ for any $z_1,\ldots,z_m\in \bbC^*$. This happens if and only if each $i\in \{1,\ldots,m\}$ occurs exactly $d$ times among $i_1,\ldots,i_k,j_1,\ldots,j_k$.  Consider the graph with set of  vertices equal to $\{1,\ldots,m\}$ and an edge from $i$ to $j$ if $X_{ij}$ enters into the monomial. The above property is equivalent to that the graph is a regular graph of valency $d$. The multiplication of monomials corresponds to the operation of adding graphs (in the sense that we add the sets of the edges). It remains to use  that any regular graph of valency $2d$ is equal to the union of regular graphs of valency $2$ (this is sometimes called a ``$2$-factorization'' or Petersen's factorization theorem) \cite{Petersen}, \S 9. 
\end{proof}

\begin{corollary} A set $([v_1],\ldots,[v_m])$ is semi-stable for the action of $\Or(V)$ on $(\bbP^n)^m$ if and only if there exists $\sigma\in \frakS_m$ such that $
\la v_{\sigma(1)},v_1\ra \cdots \la v_{\sigma(m)},v_m\ra$ is not equal to zero. 
\end{corollary}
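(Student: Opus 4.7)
The plan is to match the semi-stable locus with the non-vanishing locus of the bracket products by lifting invariants along $\Phi_T$ and invoking Proposition~\ref{P3.1}.

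First I unpack the definition. With the polarization $\calO(1,\dots,1)$ on $(\bbP^n)^m$, a point $([v_1],\dots,[v_m])$ is $\Or(V)$-semistable precisely when some $f\in R(n;m)_d$ with $d>0$ satisfies $f(v_1,\dots,v_m)\ne 0$. For one direction, for each $\sigma\in\frakS_m$ the bracket product
$$B_\sigma(v_1,\dots,v_m)\ :=\ \la v_{\sigma(1)},v_1\ra\cdots\la v_{\sigma(m)},v_m\ra$$
is $\Or(V)$-invariant and has balanced multi-degree $(2,\dots,2)$, since each $v_j$ appears once in the $j$-th factor and once in the $\sigma^{-1}(j)$-th factor. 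Thus $B_\sigma\in R(n;m)_2$, so non-vanishing of a single $B_\sigma$ already witnesses semi-stability.

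For the converse, given a witness $f\in R(n;m)_d$ with $f(v)\ne 0$, I replace $f$ by $f^2$ to land in an even graded piece $R(n;m)_{2e}$. By the FFT (stated just after \eqref{pol1}), the map $\Phi_T:\bbC[\Sym_m]^T\to R(n;m)$ is surjective, so I lift $f^2$ to $\tilde f\in\bbC[\Sym_m]^T_{2e}$. By Proposition~\ref{P3.1}, $\tilde f$ is a polynomial in the determinantal monomials $d_\tau=X_{\tau(1)1}\cdots X_{\tau(m)m}$; since $\Phi(d_\tau)=B_\tau$, applying $\Phi$ writes $f^2$ as a polynomial in the $B_\tau$'s. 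Evaluating at $v$, non-vanishing of $f^2(v)$ forces some monomial $\prod_k B_{\tau_k}(v)^{a_k}$ to be nonzero, and therefore some individual $B_\tau(v)$ to be nonzero.

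The only real subtlety is the parity issue: Proposition~\ref{P3.1} treats only the even graded part of $\bbC[\Sym_m]^T$, which is precisely why I pass from $f$ to $f^2$ before lifting. Once past that step, the argument is pure bookkeeping in the lifted algebra of symmetric matrices.
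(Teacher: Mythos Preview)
Your argument is correct and is exactly the reasoning the paper has in mind: the corollary is stated immediately after Proposition~\ref{P3.1} with no separate proof, so the intended argument is precisely this---the determinantal terms $d_\sigma$ generate the even part of $\bbC[\Sym_m]^T$, their images $B_\sigma$ lie in $R(n;m)_2$, and semi-stability is witnessed by nonvanishing of some invariant, hence of some $B_\sigma$. Your care with the parity issue (squaring $f$ before invoking Proposition~\ref{P3.1}) is the one detail worth making explicit, and you handled it correctly; the only cosmetic slip is that $e$ should be $d$.
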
 

We can make it more explicit. 

\begin{proposition}\label{P3.3} A point set $([v_1],\ldots,[v_m])$ is unstable if and only if there 
exists $I \subseteq J \subseteq \{1,\ldots,m\}$ such that $|I| + |J| = m+1$ and 
$\la v_i, v_j\ra = 0$ for all $i \in I$ and $j \in J$.
\end{proposition}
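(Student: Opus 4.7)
The plan is to combine the semi-stability criterion from the previous Corollary with a König-type matching argument applied to the symmetric bracket matrix $M = (\langle v_i, v_j\rangle)_{1\le i,j\le m}$. First I would use the Corollary to rephrase instability as the vanishing of $\prod_{k=1}^m \langle v_{\sigma(k)}, v_k\rangle$ for every $\sigma\in\frakS_m$, or equivalently, the absence of a perfect matching in the bipartite graph $H$ on $L\sqcup R = \{1,\ldots,m\}\sqcup\{1,\ldots,m\}$ with an edge $(i,j)$ whenever $M_{ij}\ne 0$.

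The implication ($\Leftarrow$) I would handle by pigeonhole: given $I\subseteq J$ with $|I|+|J|=m+1$ and the stated orthogonality, the cardinality $|J^c| = m - |J| = |I| - 1 < |I|$ forces, for any $\sigma$, some $i\in I$ to satisfy $\sigma(i)\in J$, making $\langle v_{\sigma(i)}, v_i\rangle = 0$ and killing the product. For the direction ($\Rightarrow$), I would invoke König's theorem: the failure of a perfect matching in $H$ yields a vertex cover of size at most $m-1$, which decomposes as $A'\sqcup B'$ with $A'\subseteq L$ and $B'\subseteq R$. Setting $A = L\setminus A'$ and $B = R\setminus B'$ as subsets of $\{1,\ldots,m\}$ gives a rectangular zero block $\langle v_i, v_j\rangle = 0$ for all $i \in A$, $j \in B$, with $|A|+|B|\ge m+1$.

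The final and main step is to convert this rectangular block into the nested form the Proposition demands. Setting $I := A\cap B$ and $J := A\cup B$, one has $I\subseteq J$ and $|I|+|J| = |A|+|B| \ge m+1$; for $i\in I$ and $j\in J$, the case $j\in B$ (using $i\in A$) and the case $j\in A$ (using $i\in B$ together with $\langle v_j, v_i\rangle = \langle v_i, v_j\rangle$) each yield $\langle v_i, v_j\rangle = 0$. If $|I|+|J|>m+1$, I would decrement the sum one unit at a time, removing an element from $J\setminus I$ when $|J|>|I|$ and from $I$ alone when $I = J$, preserving both $I\subseteq J$ and the orthogonality at each step, until equality is reached. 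The principal obstacle is precisely this symmetrization: König's theorem only delivers a zero block indexed by two a priori unrelated sets $A$, $B$, and it is the symmetry of the bilinear form, together with the identity $|A|+|B|=|A\cap B|+|A\cup B|$, that produces the nested structure $I\subseteq J$ with exact cardinality $m+1$.
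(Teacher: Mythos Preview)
Your proof is correct and takes a genuinely different route from the paper's. The paper applies the Hilbert--Mumford numerical criterion directly: given a destabilising one-parameter subgroup $t\mapsto(t^{r_1},\ldots,t^{r_m})$, it reorders so that $r_1\le\cdots\le r_m$, observes from $\sum r_i=0$ that some pair $r_{i_0}+r_{j_0}\le 0$ with $i_0+j_0=m+1$ must occur, and then takes $I=\{1,\ldots,i_0\}$, $J=\{1,\ldots,j_0\}$ as initial segments. This delivers the nested form $I\subseteq J$ with exact cardinality $m+1$ in one stroke, with no symmetrisation step.

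Your argument instead stays purely combinatorial: you read instability, via the Corollary, as the vanishing of every generalised diagonal of the Gram matrix, and then invoke K\"onig's theorem (equivalently, the Frobenius--K\"onig characterisation of matrices with identically zero permanent) to produce a zero rectangle $A\times B$ of total size $\ge m+1$. The price is the extra symmetrisation $I=A\cap B$, $J=A\cup B$ and the size-trimming, both of which you handle correctly using the symmetry of the bilinear form and the identity $|A\cap B|+|A\cup B|=|A|+|B|$. What your approach buys is that it never touches one-parameter subgroups at all once the Corollary is in hand, making the argument accessible to a reader who knows Hall/K\"onig but not GIT; what the paper's approach buys is that the nested structure and the exact count $m+1$ fall out automatically from the ordering of the weights, avoiding the post-processing.
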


\begin{proof} Since $(\bbC[\Sym_m]^{(2)})^T$ is generated by determinantal terms, we obtain that a matrix $A = (a_{ij})$ has all determinantal terms equal to zero if and only if it represents an unstable point in $\bbP(\Sym_m)$ with respect to the torus action.  Now the assertion becomes a simple consequence of the Hilbert-Mumford numerical criterion of stability. 

It is obvious that if such subsets $I$ and $J$ exist then all determinantal terms vanish. 
So we are left with proving the existence of the subsets $I$ and $J$ if we have an unstable matrix. 

Let $r:t\mapsto (t^{r_1},\ldots,t^{r_m})$ be a non-trivial one-parameter subgroup of the torus $T$. Permuting the points, we may assume that $r_1\le r_2 \le \ldots \le r_m$. We also have $r_1+\ldots+r_m = 0$.  We claim that there exists $i,j$ such that $i + j = m+1$ and $r_i + r_j \leq 0$.
If not, then each of $r_1 + r_m$, $r_2 + r_{m-1}$, \ldots, 
$r_{\lfloor \frac{m + 1}{2} \rfloor} + r_{\lceil \frac{m + 1}{2} \rceil}$ are strictly positive, 
which contradicts $\sum_i r_i = 0$.  

Since our symmetric matrix $A = (a_{ij})$ is unstable, by the Hilbert-Mumford criterion 
there must exist $r$ such that $\min \{r_i+r_j: a_{ij} \ne 0\} >  0$.  Permute the points, 
if necessary, so that $r_1 \leq r_2 \leq \cdots \leq r_n$.  
Let $i_0$, $j_0$ be such that $r_{i_0} + r_{j_0} \leq 0$ and $i_0 + j_0 = m+1$.
We may assume that $i_0 \leq j_0$ since the above condition is symmetric in $i_0, j_0$.
Now, since the entries of $r$ are increasing, we have that $r_i + r_j \leq 0$ for all $i \leq i_0$ and $j \leq j_0$.  Hence $a_{ij} = 0$ for all $i \leq i_0$ and $j \leq j_0$.  
Now let $I = \{1,\ldots,i_0\}$ and let $J = \{1,\ldots,j_0\}$. 
\end{proof}

Similarly, we can prove the following.

\begin{proposition}\label{P3.4} A point set $([v_1],\ldots,[v_m])$ is semi-stable but not stable if and only if 
$$m = \max \{ |I| + |J| : I \subseteq J \subseteq \{1,\ldots,m\} \mbox{ and } \la v_i, v_j \ra = 0
\mbox{ for all } i \in I, j \in J\}.$$ 
\end{proposition}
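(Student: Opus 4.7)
The plan is to adapt the argument of Proposition \ref{P3.3} by lowering the critical sum from $m+1$ to $m$. As there, we work with the symmetric matrix $A = (\la v_i, v_j \ra)$ viewed as a point of $\bbP(\Sym_m)$ and apply the Hilbert--Mumford numerical criterion with respect to the torus $T$. Proposition \ref{P3.3} already shows that semi-stability is equivalent to the nonexistence of $(I,J)$ with $|I|+|J| \geq m+1$ satisfying the vanishing, so the maximum in question is automatically at most $m$. The remaining task is to show that, among semi-stable points, failure of stability is equivalent to the existence of a non-empty $I \subseteq J$ with $|I|+|J|=m$ and $\la v_i,v_j\ra = 0$ on $I \times J$. (Throughout we implicitly require $I \neq \emptyset$ in order to rule out the vacuous choice $I=\emptyset$, $J=\{1,\ldots,m\}$.)

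For the forward direction, failure of stability supplies a non-trivial one-parameter subgroup $r$ of $T$ with $\sum r_i = 0$ and $\min\{r_i+r_j : a_{ij} \neq 0\} \geq 0$ (indeed $=0$ by semi-stability). After reordering the $v_i$ we may assume $r_1 \leq \cdots \leq r_m$. The central combinatorial step is to establish that if $r \neq 0$ then there exist $1 \leq i_0 \leq j_0$ with $i_0 + j_0 = m$ and $r_{i_0}+r_{j_0} < 0$ \emph{strictly}. Granting this, monotonicity yields $r_i + r_j < 0$ for all $i \leq i_0$ and $j \leq j_0$, forcing $a_{ij}=0$ on the block; then $I=\{1,\ldots,i_0\}$ and $J=\{1,\ldots,j_0\}$ realize $|I|+|J|=m$. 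The combinatorial step itself I would prove by summing $r_i+r_j$ over all pairs $1 \leq i \leq j$ with $i+j = m$: the total is $-r_m$ when $m$ is odd and $r_{m/2}-r_m$ when $m$ is even, and in either parity assuming all summands are $\geq 0$ together with monotonicity and $\sum r_i = 0$ forces $r\equiv 0$ after a short case check.

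For the converse, given a non-empty $I \subseteq J$ with $|I|+|J|=m$ and the vanishing condition, I would permute indices so that $I=\{1,\ldots,i_0\}$, $J=\{1,\ldots,j_0\}$ with $i_0+j_0=m$ and $1 \leq i_0 \leq j_0$, and exhibit the explicit one-parameter subgroup whose weights are $-(m-j_0)$ on $I$, $0$ on $J\setminus I$, and $i_0$ on $\{1,\ldots,m\}\setminus J$. Since $i_0+j_0=m$ these weights sum to zero, and $r$ is non-trivial because $i_0 \geq 1$. A direct case analysis over the three possible memberships of each index of a pair shows that any $(i,j)$ with $r_i+r_j < 0$ lies in $I\times J$ (up to symmetry), so the hypothesis forces $a_{ij}=0$ there; hence $\min\{r_i+r_j : a_{ij}\neq 0\} \geq 0$, proving non-stability. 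Semi-stability follows from Proposition \ref{P3.3} since $|I|+|J|=m < m+1$.

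The main obstacle is the combinatorial step in the forward direction, specifically obtaining the \emph{strict} inequality $r_{i_0}+r_{j_0} < 0$ rather than merely $\leq 0$: without strictness a pair in the block could have $r_i+r_j = 0$ with $a_{ij} \neq 0$, breaking the forced vanishing. The parity split required there mirrors the one implicit in the proof of Proposition \ref{P3.3}, so once that template is in hand the rest is bookkeeping.
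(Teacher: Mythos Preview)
Your proposal is correct and follows essentially the same route as the paper: both directions use the Hilbert--Mumford criterion exactly as you describe, with the same reordering trick, the same combinatorial lemma producing indices $i_0+j_0=m$ with $r_{i_0}+r_{j_0}<0$ strictly, and an explicit destabilizing one-parameter subgroup for the converse. Your weights $(-i_0,0,i_0)$ on $I$, $J\setminus I$, $\{1,\ldots,m\}\setminus J$ are just the paper's $(-1,0,1)$ rescaled, and your observation that one must take $I\neq\emptyset$ is a point the paper leaves implicit.
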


\begin{proof}
Suppose that $A = (a_{ij} = \la v_i, v_j \ra)$.   
Let 
$$m'(A) = \max \{ |I| + |J| : I \subseteq J \subseteq \{1,\ldots,m\} \mbox{ and } \la v_i, v_j \ra = 0 
\mbox{ for all } i \in I, j \in J\}.$$ 

Suppose that $A$ is semi-stable but not stable.
Since $A$ is not unstable, we know by the prior Proposition that $m'(A) \leq m$.
So we are left with showing that $m'(A) = m$.

Since $A$ is not stable, there is a one parameter subgroup $r:t\mapsto (t^{r_1},\ldots,t^{r_m})$ such that $r_i + r_j \geq 0$ whenever $a_{ij} \neq 0$. 
We shall re-order the points so that $r_1 \leq r_2 \leq \cdots \leq r_n$.
Recall also that $\sum_i r_i = 0$.    Since some $r_i \neq 0$, we know that 
$r_1 < 0 < r_n$.
We claim there is some $i,j$ such that $i + j = m$ and 
$r_i + r_j < 0$.  Otherwise, each of $r_1 + r_{m-1}$, $r_2 + r_{m-3}$,\ldots,
$r_{\lfloor \frac{m}{2} \rfloor} + r_{\lceil \frac{m}{2}\rceil}$ would be non-negative.
This implies that $r_1 + \cdots + r_{m-1} \geq 0$.  But since $r_n > 0$, 
we have that $r_1 + \cdots r_m > 0$, a contradiction.  Hence, the claim is true.
Now, take $i_0 \leq j_0$ such that $i_0 + j_0 = m$ and $r_{i_0} + r_{j_0} < 0$.  Now, we must have that 
$a_{ij} = 0$ for all $i \leq i_0$ and $j \leq j_0$.   Let $I = \{1,\ldots,i_0\}$ 
and $J = \{1,\ldots,j_0\}$.  Then $I \subseteq J \subseteq \{1,\ldots,m\}$, $|I| + |J| = m$, and 
$a_{ij} = 0$ for all $i \in I$, $j \in J$.  Thus $m'(A) = m$.

Conversely suppose that $m'(A) = m$.  Then $A$ is not unstable 
(if $A$ were unstable, Proposition \ref{P3.3} implies that $m'(A) \geq m+1$).
Let  $I \subseteq J \subseteq \{1,\ldots,m\}$, such that $|I| + |J| = m$,  and 
$a_{ij} = 0$ for all $i \in I$, $j \in J$.  Re-order points if necessary so that 
$I = \{1,\ldots,i_0\}$ and $J = \{1,\ldots,j_0\}$.  
Let $r:t\mapsto (t^{r_1},\ldots,t^{r_m})$ be defined as follows.
Let $r_i = -1$ for $i \leq i_0$, let $r_i = 1$ for $i > m - i_0$, and let $r_i = 0$ otherwise. 
The sum $\sum_i r_i$ is  zero and not all $r_i$ are zero, so this defines a one-parameter subgroup of the torus $T$.
Also, if $r_i + r_j < 0$ and $i \leq j$, then $i \leq i_0$ and $j \leq j_0$, 
which implies that $a_{ij} = 0$.  Thus $r_i + r_j \geq 0$ whenever $a_{ij} \neq 0$.
Hence $A$ is not stable.

\end{proof}

The Second Fundamental Theorem (SFT) of Invariant Theory for the group $\Or(V)$ describes the kernel of the homomorphism $\eqref{pol}$ (see \cite{Procesi}, p. 407, \cite{Weyl}, Chapter 2, \S 17). 

Consider the ideal $\calI(n,m)$ in $\bbC[V^n]^{\Or(V)}$ generated by the \emph{Gram functions}
$$\gamma_{I,J}:(v_1,\ldots,v_m) \mapsto \det \begin{pmatrix}\la v_{i_1}, v_{j_1}\ra&\ldots&\la v_{i_{n+2}},v_{j_{n+2}}\ra\\
\vdots&\vdots&\vdots\\
\la v_{i_{n+2}}, v_{j_1}\ra&\ldots&\la v_{i_{n+2}},v_{j_{n+2}}\ra\end{pmatrix}$$
where $I = (1 < i_1<\cdots < i_{n+2}), J = (1 < j_1<\cdots < j_{n+2})$ are subsets of $[1,m]$. We set
$\gamma_{I} = \gamma_{I,I}$.

The pre-image of this ideal in $\bbC[\Sym^2(V^*)] \cong \bbC[\Sym_m]$ is the determinant ideal $\calD_m(n+1)$ of matrices of rank $\le n+1$.  The SFT asserts that it is the kernel of the homomorphism \eqref{pol}. Then 
$$\Ker(\Phi_T) =\calD_m(n+1) \cap \bbC[\Sym_m]^T$$ 
and it is finitely generated by polynomials of the form $\mathbf{m}\Delta_{I,J}$, where $\mathbf{m}$ is a monomial 
in $X_{ij}$ of degree $(k,k,\ldots,k) - \deg(\Delta_{I,J})$ for some $k \geq 2$.

Our naive hope was that  
$\Ker(\Phi_T)$ is generated only by polynomials of the form
$\mathbf{m}\Delta_{I,J}$, for $\mathbf{m}$ having degree $(2,2,\ldots,2) - \deg(\Delta_{I,J})$.
This is not true even if we restrict it to the open subset of semi-stable points in $\Sym_m$ with respect to the torus action. The symmetric matrix
$$A = \begin{pmatrix}0&0&1&1&1\\
0&0&1&1&1\\
1&1&1&0&0\\
1&1&0&1&0\\
1&1&0&0&1\end{pmatrix}$$
has rank $4$, but for any $(i,j)$ the product $a_{ij}A_{ij}$ (where $A_{ij}$ is the complementary minor) is equal to zero. Thus our naive relations make it appear that $A$ has rank $3$.
Also, $a_{31}a_{42}a_{13}a_{24}a_{55}\ne 0$, so the matrix represents a semi-stable point. It can be shown that no counter-example exists with $m < 5$.

The following $T$-invariant polynomial vanishes on rank $3$ matrices and is nonzero when evaluated on the matrix $A$ above:
$$a_{13}^2a_{14}^2a_{25}^2 \Delta_{\{2,3,4,5\},\{2,3,4,5\}}.$$
Hence we need to consider higher degree relations.
We can at least give a bound on the degree of such relations, again appealing to 
Petersen's factorization theorem.

\begin{proposition}\label{rel_bound}
The ideal $\Ker(\Phi_T)$ is generated by polynomials of the form
$\mathbf{m}\Delta_{I,J}$, for $\mathbf{m}$ having degree at most 
$(2(n+2),2(n+2),\ldots,2(n+2)) - \deg(\Delta_{I,J})$.
\end{proposition}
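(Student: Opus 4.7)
The plan is an induction on the $T$-weight $d$ of a multi-homogeneous element $F\in\Ker(\Phi_T)$ of multi-degree $(d,d,\ldots,d)$: for $d\le 2(n+2)$ there is nothing to do, while for $d>2(n+2)$ I will show every $T$-invariant summand $\mathbf{m}\Delta_{I,J}$ in this degree lies in the ideal generated by analogous summands of multi-degree $(d-2,\ldots,d-2)$, so that finitely many iterations push us into the claimed range.

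To set this up, write $F=\sum_{I,J} p_{I,J}\Delta_{I,J}$ by the SFT; since $\Delta_{I,J}$ is a $T$-eigenvector of weight $\chi_I+\chi_J$ and $F$ is $T$-invariant, only the matching weight component of each $p_{I,J}$ contributes, and expanding those components as sums of monomials yields
$$F=\sum_\alpha c_\alpha\,\mathbf{m}_\alpha\Delta_{I_\alpha,J_\alpha}$$
with every summand individually $T$-invariant of multi-degree $(d,\ldots,d)$. For the inductive step, given such a summand $\mathbf{m}\Delta_{I,J}$ with $d>2(n+2)$, I will exhibit a determinantal term $d_\sigma$ (a $T$-invariant monomial of multi-degree $(2,\ldots,2)$) dividing $\mathbf{m}$, and factor
$$\mathbf{m}\Delta_{I,J}=d_\sigma\cdot\bigl((\mathbf{m}/d_\sigma)\,\Delta_{I,J}\bigr);$$
the cofactor is then $T$-invariant of multi-degree $(d-2,\ldots,d-2)$ and still lies in $\Ker(\Phi_T)$, so the induction hypothesis expresses it in terms of generators of the stated form.

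The combinatorial heart is the existence of such a divisor $d_\sigma\mid\mathbf{m}$. Viewing $\mathbf{m}$ as a multi-graph $G$ on $\{1,\ldots,m\}$ with loops allowed, vertex $k$ has valency $d-\chi_I(k)-\chi_J(k)\in\{d-2,d-1,d\}$, and a determinantal term dividing $\mathbf{m}$ corresponds to a $2$-regular spanning sub-multigraph of $G$. Pad $G$ to a $d$-regular multi-graph $\tilde G$ by adjoining one ``ghost'' loop at each vertex of $I\cap J$ together with a matching on the even-sized set $I\triangle J$, introducing a total of exactly $n+2$ ghost edges. When $d$ is even, Petersen's $2$-factorization theorem decomposes $\tilde G$ into $d/2$ disjoint spanning $2$-factors; since the $n+2$ ghost edges are distributed among $d/2>n+2$ factors, pigeonhole produces a $2$-factor containing no ghost edge, which is the sought $d_\sigma$ inside $G$. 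The principal obstacle is the odd-$d$ case (which forces $m$ even), where Petersen does not apply to $\tilde G$ directly; the remedy I would use is to first peel off a perfect matching from $\tilde G$ via the classical theorem that bridgeless odd-regular multi-graphs admit a $1$-factor, then $2$-factor the residual $(d-1)$-regular graph and run a refined pigeonhole that tracks carefully how the $n+2$ ghost edges distribute between the matching and the subsequent $2$-factors.
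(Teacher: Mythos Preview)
Your even-degree argument is essentially the paper's proof. The only cosmetic difference is that where you manufacture $n+2$ ``ghost'' edges (loops on $I\cap J$ and a matching on $I\triangle J$) to pad $G$ up to a $d$-regular graph, the paper instead picks any single monomial term of $\Delta_{I,J}$ and adjoins its $n+2$ edges as the graph $\Gamma''$. Either way one gets a $2k$-regular multigraph, applies Petersen's $2$-factorization into $k$ factors, and uses pigeonhole against the $n+2$ added edges to find a $2$-factor lying entirely inside the graph of $\mathbf{m}$. The paper's device is marginally cleaner because the added edges visibly come from the minor itself, but the two constructions are interchangeable.

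The genuine gap is in your odd-$d$ case. You invoke ``bridgeless odd-regular multi-graphs admit a $1$-factor,'' but you never verify that your padded graph $\tilde G$ is bridgeless, and in general it need not be: for odd $d$ one can have bridges (the parity obstruction that forbids bridges in even-regular graphs disappears). A concrete failure mode is when a ghost matching edge is the only edge between two otherwise disconnected pieces of $\tilde G$. Without bridgelessness the cited $1$-factor theorem does not apply, and your ``refined pigeonhole'' is only gestured at, not carried out. The paper avoids this entirely by asserting at the outset that it suffices to consider generators $\mathbf{m}\Delta_{I,J}$ of multi-degree $(2k,\ldots,2k)$; since the surrounding discussion (Proposition~2.1 and the remark in Conjecture~2.6) is explicitly about the second Veronese subring, only even multi-degrees are in play, and the odd case never arises. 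If you want your write-up to stand alone you should either justify that reduction to even degree, or drop the odd case altogether and state the result for the second Veronese.
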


\begin{proof}
It is clear that  $\Ker(\Phi_T)$ is generated by relations of the form 
$\mathbf{m}\Delta_{I,J}$ where $\mathbf{m}$ is a monomial of degree
$(2k,2k,\ldots,2k) - \deg(\Delta_{I,J})$, for arbitrary $k$.  
Suppose that $k > n+2$.  
The monomial $\mathbf{m}$ corresponds to the multigraph $\Gamma'$ with edges 
$ij$ for each $X_{ij}$ dividing $\mathbf{m}$, counting multiplicity.
Choose any term from $\Delta_{I,J}$; similarly this 
term corresponds to a multigraph $\Gamma''$.  The graph $\Gamma''$ has 
exactly $n+2$ edges.

The union $\Gamma = \Gamma' \sqcup \Gamma''$ is a $2k$-regular graph.  By Petersen's factorization theorem, 
 we know that $\Gamma$ completely factors into $k$ disjoint 
$2$-factors.  Since $k > n+2$, at least one of these $2$-factors is disjoint from $\Gamma''$.  Hence, this $2$-factor must be a factor of $\Gamma'$.
This means that the monomial $\mathbf{m}$ is divisible by the $T$-invariant 
monomial $\mathbf{m}_0$ corresponding to the $2$-factor of $\Gamma'$: 
$$\mathbf{m} = \mathbf{m}_0 \cdot \mathbf{m}'.$$  
Hence, the relation $\mathbf{m}\Delta_{I,J}$ is equal to 
$\mathbf{m}_0 \cdot (\mathbf{m}' \Delta_{I,J})$, where 
$\mathbf{m}' \Delta_{I,J} \in \Ker(\Phi_T)$ has smaller degree.
\end{proof}

\begin{conjecture}\label{Snowden-conjecture}
A recent conjecture of Andrew Snowden (informal communication) implies that there is a bound 
$d_0(n)$ such that $R(n;m)$ is generated in degree $\leq d_0(n)$ for all $m$.   Further, after choosing a minimal set of generators (each of degree $\leq d_0(n)$), his conjecture also implies that there is a bound 
$d_1(n)$ such that the ideal of relations is generated in degree $\leq d_1(n)$ for all $m$.  His conjecture applies to all GIT quotients of 
the form $X^m /\!/ G$ where $G$ is linearly reductive and $X$ is a $G$-polarized projective variety.

One of our goals was to prove (or perhaps disprove) his conjecture for this case of $\bbP(V)^m /\!/ \PO(V)$.  We were not able to do so.  However, we have shown that the second Veronese subring is generated in lowest degree, providing small evidence of the first part of his conjecture.  Furthermore, Proposition \ref{rel_bound} is a small step towards proving an $m$-independent degree bound on the generating set of the ideal (again for the second Veronese subring only).
\end{conjecture}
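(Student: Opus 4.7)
The plan is to place the family $\{R(n;m)\}_{m\ge 1}$ into a representation-stability framework — for instance as a twisted commutative algebra in the sense of Sam--Snowden, or as an FI-algebra — and to invoke the noetherianity of such objects in characteristic zero to produce the $m$-independent bounds $d_0(n)$ and $d_1(n)$. The covariant transition maps come from permutations of point labels together with the natural pullbacks of $\Or(V)$-invariants along coordinate projections $V^{m'} \to V^m$; restricting to the diagonally graded piece $R(n;m)$ is compatible with this formalism because the torus $T$ acts by an overall scalar on each factor, so the grading behaves equivariantly under label permutation.

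The concrete first step is to present the stabilized algebra as a quotient of the simpler object $m \mapsto \bbC[\Sym_m]^T$, whose homogeneous pieces are classified combinatorially by regular multigraphs on $m$ vertices (Proposition \ref{P3.1} handles the $2d$-regular case, while odd-degree generators arise from perfect matchings and so occur only for even $m$). The Second Fundamental Theorem identifies the ideal of relations with the intersection of the $(n+2)\times(n+2)$ minor ideal with $\bbC[\Sym_m]^T$. Both objects carry natural $\frakS_m$-equivariant structures, and the task reduces to showing finite generation of the TCA and of the ideal as equivariant objects: finite generation of the algebra yields $d_0(n)$, and finite generation of the ideal yields $d_1(n)$.

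The principal obstacle — and surely the reason this is recorded as a conjecture rather than a theorem — is effectiveness. Abstract noetherianity theorems for TCAs or FI-algebras guarantee the existence of $d_0(n)$ and $d_1(n)$ without producing explicit functions of $n$; turning them into quantitative statements demands a genuinely combinatorial input. Within this paper's framework, Proposition \ref{rel_bound} handles the second Veronese via Petersen's 2-factorization, but no comparable odd-valence factorization theorem exists, so the odd-degree generators (perfect matchings) and their relations fall outside the current argument. Moreover, the rank-$4$ matrix $A$ displayed just before Proposition \ref{rel_bound} shows that the naive low-degree monomial-times-minor relations fail to generate even in the semi-stable locus, so any effective bound must exceed the optimistic guess and will demand new ideas. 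A sensible first test case is $n=1$, where $\bfO_1^m$ is described explicitly as the surface $\Cay_{m-1}$ (or its blow-down for even $m$), allowing $d_0(1)$ and $d_1(1)$ to be computed directly and used to calibrate a combinatorial argument in general.
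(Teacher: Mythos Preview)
The paper contains no proof of this statement: it is explicitly labeled a \emph{conjecture}, and the authors say outright ``We were not able to do so.'' There is therefore nothing in the paper to compare your proposal against.

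Your proposal is not a proof either, and you seem to recognize this. It is a reasonable outline of the representation-stability approach one would naturally try, but it stops short of carrying out the key step: you say ``the task reduces to showing finite generation of the TCA and of the ideal as equivariant objects,'' and then do not show it. That reduction \emph{is} the content of the conjecture in this framework, so the outline has not moved the problem.

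There is also a conceptual slip worth flagging. You identify the principal obstacle as \emph{effectiveness}, but the conjecture as stated asks only for the \emph{existence} of bounds $d_0(n)$, $d_1(n)$ depending on $n$ alone. If an abstract noetherianity theorem for the relevant TCA or FI-algebra genuinely applied, it would already establish the conjecture; explicit bounds are not required. The actual obstacle is that you have not verified that the family $m\mapsto \bbC[\Sym_m]^T$, or its quotient by the determinantal ideal, fits into any of the categories (bounded-degree TCAs, FI-modules over a noetherian base, etc.) for which noetherianity was known at the time. The torus invariants here are indexed by regular multigraphs, not by tensors of bounded format, and it is not obvious that the standard Sam--Snowden machinery handles this combinatorics without further work. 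Until that is checked, the outline remains a plan rather than an argument.
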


\section{A toric variety} The variety $\bbS_m = \Sym_m/\!/T = \Proj~\bbC[\Sym_m)]^T$ is a toric variety of dimension $m(m-1)/2$. We identify the character lattice of $\bbC^*{}^m$ with $\bbZ^m$. We have 
$\Sym_m = \oplus \bbC X_{ij}$, where $X_{ij}$ is an eigenvector with the character $\bfe_i+\bfe_j$.
The lattice $M$ of characters  of the torus $(\bbC^*)^{m(m-1)/2}$ acting on $\bbS_m$ is equal to the kernel of the homomorphism $\bbZ^{m(m+1)/2}\to \bbZ^m, \bfe_{ij}\mapsto \bfe_i+\bfe_j.$ It is defined by the matrix $A$ with $(ij)$-spot in a  $k$-th row equal 
\[a_{k,ij} = \begin{cases}
      0& \text{if}\ k \ne i,j,\\
      1& \text{if}\ k = i \ne j, \text{or}\  k = j\ne i,\\
      2& \text{if}\ k = i = j.
\end{cases}\]
Let 
$$S = \{x\in \bbZ_{\ge 0}^{m(m+1)/2}: Ax = 2d(\bfe_1+\ldots+\bfe_m), \text{for some}\ d \ge 0\}$$
be the graded semigroup.
Then 
$$\bbC[\Sym_m]^T = \bbC[S].$$
In other words, the toric variety $\bbS_m$ is equal to the toric space $\bbP_\Delta$, where 
$\Delta_m$ is the convex polytope in $\{x\in \bbR^{m(m+1)/2}:Ax= (2,\ldots,2)\}$ spanned by the vectors $v_\sigma, \sigma\in \frakS_m,$ such that $a_{ij}$ is equal to the number of edges from $i$ to $j$ in the regular graph corresponding to the determinantal term $d_\sigma$. For example, if $m = 3$, $\sigma = (12)$ defines the $v_\sigma$ with $a_{12} = 2, a_{33} = 1$ and $a_{ij} = 0$ otherwise. Thus the number of lattice points in the polytope $\Delta$ is equal to the number $k(m)$ of determinantal terms in a general symmetric matrix. 

\begin{proposition}
$$\#(d\Delta_m)\cap M = \# \{\text{regular graphs with valency}\  2d\}.$$
\end{proposition}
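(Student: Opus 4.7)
The plan is a direct definition chase. By construction $\Delta_m = \{x\in \bbR_{\ge 0}^{m(m+1)/2}: Ax = (2,\ldots,2)\}$, so by homogeneity
$$d\Delta_m = \{x\in \bbR_{\ge 0}^{m(m+1)/2}: Ax = (2d,\ldots,2d)\}.$$
Thus the lattice points $(d\Delta_m)\cap M$ (identified via the fixed affine embedding with the integer points of the ambient slice) are precisely the non-negative integer tuples $x = (a_{ij})_{1\le i\le j\le m}$ with $Ax = (2d,\ldots,2d)$; equivalently, by the description $\bbC[\Sym_m]^T = \bbC[S]$, these are exactly the degree-$d$ elements of the semigroup $S$ defined earlier.

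Next I would exhibit the bijection with $2d$-regular graphs. To a lattice point $x = (a_{ij})$ associate the multigraph $\Gamma(x)$ on vertex set $\{1,\ldots,m\}$ having $a_{ij}$ edges between $i$ and $j$ for $i\ne j$ and $a_{ii}$ loops at $i$, following the same convention as in the earlier discussion of determinantal terms, namely that loops and repeated edges are allowed and a loop contributes $2$ to the degree of its vertex. From the explicit formula for $A$ — $a_{k,ij}$ equals $1$ when $k\in\{i,j\}$ with $i\ne j$, equals $2$ when $k=i=j$, and vanishes otherwise — the $k$-th coordinate of $Ax$ is exactly
$$\sum_{j\ne k} a_{kj} \;+\; 2 a_{kk},$$
which is precisely the degree of the vertex $k$ in $\Gamma(x)$. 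Therefore $Ax=(2d,\ldots,2d)$ if and only if every vertex of $\Gamma(x)$ has valency $2d$. Since the assignment $x \leftrightarrow \Gamma(x)$ is manifestly a bijection (any regular multigraph on the labeled vertices $\{1,\ldots,m\}$ comes from a unique tuple $(a_{ij})$), this yields the desired count.

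There is no substantive obstacle here; the only care needed is the bookkeeping convention that a loop at $i$ contributes $2$ rather than $1$ to the valency, which is already encoded in the $a_{k,ii}=2$ entries of the matrix $A$ and matches the way determinantal terms $d_\sigma$ correspond to $2$-regular graphs in the paragraph preceding Proposition \ref{P3.1}.
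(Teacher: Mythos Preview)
Your definition chase is correct in spirit and essentially reproduces the degree--valency correspondence already established in the proof of Proposition~\ref{P3.1}. However, there is a gap in your opening sentence. The paper defines $\Delta_m$ as the convex polytope \emph{spanned by the vectors} $v_\sigma$, i.e.\ as the convex hull of the determinantal-term vectors, not as the full slice $\{x\in\bbR_{\ge0}^{m(m+1)/2}:Ax=(2,\ldots,2)\}$. The inclusion $d\Delta_m\subseteq\{x\ge0:Ax=(2d,\ldots,2d)\}$ is immediate, so every lattice point of $d\Delta_m$ is indeed a $2d$-regular multigraph; but the reverse inclusion of lattice points is not ``by construction''. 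One must show that every $2d$-regular multigraph actually lies in $d\Delta_m$, and this is exactly where Petersen's factorization theorem enters: a $2d$-regular multigraph decomposes as a sum $v_{\sigma_1}+\cdots+v_{\sigma_d}$ of $2$-regular graphs, so the corresponding lattice point lies in $d\Delta_m$. That is precisely the content of Proposition~\ref{P3.1}, and it is why the paper's one-line proof reads ``This follows easily from Proposition~\ref{P3.1}.'' Once you invoke that proposition (or Petersen directly) for the missing inclusion, your argument is complete and coincides with the paper's.
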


\begin{proof}
This follows easily from Proposition \ref{P3.1}. 
\end{proof}

\section{Examples}
\begin{example}\label{ex5.1} Let $n = 2$ and $m = 3$. We are interested in the moduli space of 3 points in $\bbP^2$ modulo the group of projective transformations leaving invariant a nonsingular conic. The group $\PO(3) \cong \PSL_2$ is a 3-dimensional group. So, we expect a 3-dimensional variety of configurations.

We have five determinantal terms given by the following graphs:

\bigskip
\xy 
(-35,15)*{};(-35,-5)*{};@={(0,0),(14.2,0),(7.1,7.1),(30,0),(40,0),(35,10),(60,0),(70,0),(65,10)}@@{*{\bullet}};
(0,0)*{};(14.2,0)**{}**\dir{-};
(0,0)*{};(7.1,7.1)**{}**\dir{-};
(14.2,0)*{};(7.1,7.1)**{}**\dir{-};
(30,-.5)*{};(40,-.5)**{}**\dir{-};
(30,.5)*{};(40,.5)**{}**\dir{-};
(35,12.1)*\cir<5pt>{};(65,12.1)*\cir<5pt>{};(70,-2)*\cir<5pt>{};(60,-2)*\cir<5pt>{};
\endxy
\bigskip
Let $t_0,t_1,t_2,t_3,t_4$ be generators of the ring $R(2;3)$ corresponding, respectively,  to the triangle, to the three graphs of the second type, and the  one graph of the third type. We have the cubic relation:
$$t_1t_2t_3 - t_0^2t_4 = 0.$$
Thus our variety is a cubic threefold in $\bbP^4$. Its singular locus consists of three lines 
$$t_0 = t_1 = t_2 = 0,\ t_0 = t_1 = t_3 = 0, \ t_0 = t_2 = t_3 = 0.$$
Let $H_i$ be  hyperplane  section of the cubic by the coordinate hyperplane $t_i = 0$. Then 
\begin{itemize}
\item 
$H_0$: point sets with two points  conjugate with respect to the fundamental conic.  $H_0$ is the union of three planes $\Lambda_i:t_0 = t_i = 0, i = 1,2,3$.
\item $H_4$: one of the points lies on the fundamental  conic. It is the union of three planes $\Pi_i:t_4 = t_i = 0, i = 1,2,3$.
\item $H_i$ is the union of two planes $\Lambda_i$ and $\Pi_i, i = 1,2,3$.
\item  $\Lambda_i\cap \Lambda_j$ is a singular line on $\bbS_3$, the locus of point sets where one point is  the intersection point of the polar lines of two other points;
\item $\Pi_i\cap \Pi_j$: two points are on the fundamental conic;
\item $\Pi_i\cap \Lambda_i$: two points are conjugate, the third point is on the conic;
\item $\Pi_i\cap \Lambda_j, i \ne j$: one point is on the conic, and another point lies on the tangent to the conic at this point;
 \item $\Lambda_1\cap \Lambda_2\cap \Lambda_3$ is the point representing the orbit of ordered self-conjugate triangles;
\item $\Pi_1\cap \Pi_2\cap \Pi_3$ is the point representing the orbit of ordered sets of points on the  fundamental conic.
\end{itemize}

The singular point $\Lambda_1\cap \Lambda_2\cap \Lambda_3 = [0,0,0,0,1]$ represents the  orbit of ordered self-polar  triangles.  Recall that unordered self-conjugate triangles are parameterized by the homogeneous space $\PO(3)/\frakS_{4}$. It admits a smooth compactification isomorphic to the Fano threefold of degree 5 and index 2 \cite{Mukai}, Theorem (2.1) and Lemma (3.3) (see also \cite{CAG}, 2.1.3).

\end{example}
\begin{example} Let us look at the variety $\bfO_1^3$. It isomorphic to the subvariety of $\bfO_2^3$ representing collinear triples of points.
The equation of the determinant of the Gram matrix of three points is 
\beq\label{hyp}
2t_0-t_1-t_2-t_3+t_4 = 0.
\eeq
It a hyperplane section of $\bbS_3$ isomorphic to a cubic surface $S$ in $\bbP^3$ with equation 
\beq\label{segre}
t_1t_2t_3 + 2t_0^3-t_0^2t_1-t_0^2t_2-t_0^2t_3 = 0.
\eeq
The surface is projectively isomorphic to the 4-nodal Cayley cubic surface given by the equation
$$x_0x_1x_2+x_0x_2x_3+x_0x_1x_3+x_1x_2x_3 = 0.$$
 Its singular points are $[t_0,t_1,t_2,t_3] = [1,1,1,1], 
[0,0,0,1], [0,0,1,0], [0,1,0,0]$. Since the surface is irreducible, and all collinear sets of points satisfy \eqref{hyp}, we obtain that the surface represents the locus of collinear point sets. It is also isomorphic to the variety $\bfO_1^3$ of 3 points on $\bbP^1$. The additional singular point $[1,1,1,1]$ not inherited from the singular locus of $\bfO_2^3$ is the orbit of three  collinear points $[v_1], [v_2], [v_3]$ such that the determinantal  terms of the Gram matrix $G(v_1,v_2,v_3)$ are all equal.  This is equivalent to that all principal minors are equal to zero and the squares of the discriminant terms $d_{(123)}$ and $d_{(321)} $ are equal. This gives two possible  points $[t_0,t_1,t_2,t_3] = [\pm 1,1,1,1]$. We check that the point $[-1,1,1,1]$  does not satisfy \eqref{segre}. Thus the point $[1,1,1,1]$ is determined by the condition that the principal minors of the Gram matrix $G(v_1,v_2,v_3)$ are equal to zero. This implies that $[v_1] = [v_2] = [v_3]$. It follows from the stability criterion that this point is not one of the two isotropic points. 

It is immediate that $R(1;2)^{(2)}$ is freely generated by  two determinantal terms and hence $\bfO_1^2\cong \bbP^1$.  The three projections $\bfO_1^3$ to $\bfO_1^2$ are  regular  map.  If we  realize $\calS_3$ as the image of the anti-canonical system of the blow-up of 6 vertices of a complete quadrilateral  in the plane, then the three maps are  defined by the linear system of conics through three subsets of 4 vertices no three lying on one side of the quadrilateral. One can show that these are the only regular maps from $\calS_3$ to $\bbP^1$.

\bigskip
\xy 
(-35,0)*{};@={(27.5,27.5),(8.5,8.5),(29.5,13.8),(39,16), (19,19),(50.5,4.5)}@@{*{\bullet}};
(30,30)*{};(0,0)**{}**\dir{-};
(25,30)*{};(55,0)**{}**\dir{-};
(-5,5)*{};(55,20)**{}**\dir{-};
(60,0)*{};(5,25)**{}**\dir{-};

\endxy
\bigskip

Finally, observe, that we can use the conic to identify the plane with its dual plane. In this interpretation a triple of points becomes a triple of lines, the polar lines of the points with respect to the conic. Intersecting each line with the conic we obtain three ordered pairs of points on a conic. 

Note that a set of six distinct points on a nonsingular conic can be viewed as the set of Weierstrass points of a hyperelliptic curve $C$ of genus 2. An order on this set defines a symplectic basis of the $\bbF_2$-symplectic space $\Jac(C)[2]$ of 2-torsion points of its Jacobian variety $\Jac(C)$. The GIT-quotient of the subvariety of $(\bbP^2)^6$ of ordered points on a conic by the group $\SL_3$ is isomorphic to the Igusa quartic in $\bbP^4$ (see \cite{DO}, Chapter 1, Example 3). A partition of the set of Weierstrass points in three pairs defines a maximal isotropic subspace in $\Jac(C)[2]$. An order of the three pairs chooses a basis in this space. The moduli space of principally polarized abelian surfaces $A$ equipped with a symplectic basis 
in  $A[2]$ is isomorphic to the quotient of the Siegel space $\calZ_2 = \{X\in \Sym_4:\textup{Im}(X) > 0\}$ by the group 
$\Gamma(2) = \{M\in \Sp(4,\bbZ):A\equiv I_4 \mod 2\}$. The moduli space of principally polarized abelian surfaces together with a choice of a basis in a maximal isotropic subspace of 2-torsion points is isomorphic to the quotient of $\calZ_2$ by the group $\Gamma_1(2) = \{M = \begin{pmatrix}A&B\\
C&D\end{pmatrix}:A-I_2\equiv C\equiv 0 \mod 2\}$. Thus, we obtain that our variety $\bfO_2^3$ is naturally birationally isomorphic to the quotient $\calZ_2/\Gamma_1(2)$ and this variety is isomorphic to the quotient of $\calZ_2/\Gamma(2)$ by the group $G = \frakS_2\times \frakS_2\times \frakS_2$. The Satake compactification of $\calZ_2/\Gamma(2)$ is isomorphic to the Igusa quartic. In \cite{Mukai2}, S. Mukai shows that the Satake compactification of $\calZ_2/\Gamma_1(2)$ is isomorphic to the double cover of $\bbP^3$ branched along the union of four coordinate hyperplanes. It is easy to see that it is birationally isomorphic to the cubic hypersurface defining $\bfO_2^3$. A remarkable result of Mukai is that the Satake compactifications of $\calZ_2/\Gamma(2)$ and $\calZ_2/\Gamma_1(2)$ are isomorphic. 
\end{example}

\begin{remark} Assume $m = n+1$. Fix a volume form on $V$ and use it to identify the linear spaces $V^*$ and $\bigwedge^nV$. This identification is equivariant with respect to the action of $\Or(V)$ on $V$ and $\Or(V^*)$, where the orthogonal group of $V^*$ is with respect to the dual quadratic form on $V^*$. Passing to the 
configuration spaces, we obtain a natural birational involution $F:\bfO_n^{n+1}\dasharrow \bfO_n^{n+1}$. If $G$ is the Gram matrix of vectors $v_1,\cdots,v_{n+1}$, then the Gram matrix $G^*$ of the vectors $w_i = v_1\wedge\cdots\wedge v_{i-1}\wedge v_{i+1}\wedge\cdots\wedge v_{n+1}\in V^*$ is equal to the adjugate matrix of $G$ (see \cite{CAG}, Lemma 10.3.2). In the case $n = 2$, the birational involution correspond to the involution defined by conjugate triangles (see loc.cit., 2.1.4). Using the modular interpretation of $\bfO_2^3$ from the previous example, the involution $F$ corresponds to the Fricke (or Richelot) involution of $\calZ_2/\Gamma_1(2)$ (see  \cite{Mukai2}, Theorem 2).
\end{remark} 

\begin{example} Now let us consider the variety $O_1^4$ of 4 points in $\bbP^1$ modulo $\PO(2) \cong \bbC^*\times \bbZ/2\bbZ$. It is another threefold.  First we get the 5-dimensional toric variety of symmetric matrices of size 4. The coordinate ring is generated by 17 (3+4+6+3+1) determinantal terms:

\bigskip
\xy
(-15,0)*{};@={(0,0),(10,0),(10,10),(0,10),(20,0),(34.2,0),(27.1,7.1),(40,0),(40,10),(50,10),(50,0),(27.1,12),(60,0),(60,10),(70,0),(70,10),(80,0),(80,10),(90,0),(90,10)}@@{*{\bullet}};
(0,0)*{};(10,0)**{}**\dir{-};
(10,10)*{};(10,0)**{}**\dir{-};
(10,10)*{};(0,10)**{}**\dir{-};
(0,10)*{};(0,0)**{}**\dir{-};
(20,0)*{};(34.2,0)**{}**\dir{-};
(20,0)*{};(27.1,7.1)**{}**\dir{-};
(27.1,7.1)*{};(34.2,0)**{}**\dir{-};
(40,.5)*{};(50,.5)**{}**\dir{-};
(40,-.5)*{};(50,-.5)**{}**\dir{-};
(60,.5)*{};(70,.5)**{}**\dir{-};
(60,-.5)*{};(70,-.5)**{}**\dir{-};
(60,10.5)*{};(70,10.5)**{}**\dir{-};
(60,9.5)*{};(70,9.5)**{}**\dir{-};
(25.5,13)*\cir<4pt>{};(39.2,12)*\cir<4pt>{};(49.2,12)*\cir<4pt>{};
(79.2,12)*\cir<4pt>{};(89.2,12)*\cir<4pt>{};(79.2,2)*\cir<4pt>{};(89.2,2)*\cir<4pt>{};
\endxy

\bigskip
Let $x_1,x_2,x_3, y_1,y_2,y_3,y_4, z_1,\ldots,z_6, u_1,u_2,u_3,v$ be the variables. We have additional equations expressing the condition that the rank of matrices is less than or equal to 2. One can show that the equations are all linear:
$$ \ a_{ij}\det A_{ij} = 0.$$
Their number is equal to $10$ but there are 3 linear dependencies found by expanding the determinant expression along columns. 
\end{example}

We may also consider the spaces $\bfS\bfO_n^m = \bbP(V)^m/\!/\Or^+(V)$, where $\Or^+(V) = \Or(V)\cap \SL(V)$ is the special orthogonal group. Note that $\PO(V) \cong \PO^+(V)$ if $\dim V$ is odd. Thus we will be interested only in the case when $\dim V$ is even. In this case $\PO^+(V)$ is a subgroup of index 2 in $\PO(V)$, so the variety $\bfS\bfO_n^m$ is a double cover of $\bfO_n^m$.  We have 
$$\bfS\bfO_n^m = \Proj\ R^+(n;m),$$
where $R^+(n;m) = \oplus_{d=0}^\infty (S^d(V)^*{}^{\otimes m})^{\Or^+(V)}$. There are more invariants now. The additional invariants in $\bbC[V^m]^{\Or^+(V)}$ are the Pl\"ucker brackets 
$$p_{i_1,\ldots,i_{n+1}}:(v_1,\ldots,v_m) \mapsto v_{i_1}\wedge \ldots\wedge v_{i_{n+1}},$$
where we have fixed a volume form on $V$. 
There are additional basic relations (see \cite{Weyl}, Chapter 2, \S 17)
\begin{eqnarray}\label{rels}
p_{i_1,\ldots,i_{n+1}}p_{j_1,\ldots,j_{n+1}} - \det ( [i_\alpha,i_\beta])_{1\le \alpha,\beta\le n+1} = 0,\\
\sum_{j=1}^{n+1} (-1)^j\sum p_{i_1,\ldots,\hat{i_j},\ldots, i_{n+1}}[i_{j},i_{n+2}]  = 0.
\end{eqnarray}
The graded part $R^+(n;m)_{d}$ is spanned by the monomials $p_{I_1}\cdots p_{I_k}[i_1j_1]\cdots [i_s,j_s]$ where each index $j\in [1,m]$ appears exactly $d$ times. Using the first relation in \eqref{rels}, we may assume that at most  one Pl\"ucker coordinate $p_I$ appears.  Also we see that the product of any two elements in $R^+(n;m)_d$ belongs to $R(N;m)_{2d}$.

\section{Points in $\bbP^1$ and generalized Cayley cubics}

The group $\SO(2)$ is isomorphic to the one-dimensional complex torus $\bbC^*$. We choose projective coordinates in $\bbP^1$ to identify a quadric in $\bbP^1$ with the set $Q = \{0,\infty\}$ so that $\SO(2)$ acts by $\lambda: [t_0,t_1]  \mapsto [\lambda t_0,\lambda^{-1}t_1]$. The points on $Q$ are the fixed points of $\SO(2)$. The group $\Or(2)$ is generated by  $\SO(2)$ and the transformation $[t_0,t_1]  \mapsto [t_1,t_0]$.

Recall that there is a Chow quotient $(\bbP^1)^m/\!/\bbC^*$ defined by the quotient fan of the toric variety  $(\bbP^1)^m$ (see \cite{KSZ}). 

\begin{lemma} Consider $(\bbP^1)^m$ as a toric variety, the Cartesian product of the toric varieties $\bbP^1$. Then Chow quotient $(\bbP^1)^m/\!/\SO(2)$ is isomorphic to the toric variety $X(A_{m-1})$ associated to the root system of type $A_{m-1}$ defined by the fan in the dual  lattice of the root lattice of type $A_{m-1}$ formed by the Weyl chambers.
\end{lemma}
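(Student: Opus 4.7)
The plan is to use the Kapranov--Sturmfels--Zelevinsky description \cite{KSZ}: the Chow quotient of a toric variety by a subtorus is itself a toric variety, whose fan is the common refinement, in $(N/N')_\bbR$, of the images of the cones of the original fan. Here $(\bbP^1)^m$ is the toric variety of the fan $\Sigma$ in $N = \bbZ^m$ with rays $\pm e_1,\ldots,\pm e_m$ and maximal cones $C_\epsilon := \textup{cone}(\epsilon_1 e_1,\ldots,\epsilon_m e_m)$ for $\epsilon \in \{\pm 1\}^m$, and the weights $(1,-1)$ of the $\SO(2)$-action on each $\bbP^1$ show that the diagonal $\SO(2)$ corresponds to the rank-one sublattice $N' = \bbZ\cdot(1,\ldots,1)$. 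So the task is to identify the common refinement of the projected cones $\bar C_\epsilon := \textup{cone}(\epsilon_1 \bar e_1,\ldots,\epsilon_m \bar e_m)$ in $(N/N')_\bbR$ with the Weyl-chamber fan of type $A_{m-1}$.

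I would first describe the candidate rays and maximal cones. Writing $\bar e_i$ for the image of $e_i$ and $\bar e_I := \sum_{i \in I} \bar e_i$, the single relation $\sum_i \bar e_i = 0$ yields $-\bar e_j = \bar e_{[m]\setminus\{j\}}$, so the vectors $\bar e_I$ indexed by nonempty proper subsets $I \subsetneq [m]$ exhaust all rays that can arise in the refinement, and these are exactly the rays of $X(A_{m-1})$. For each $\sigma \in \frakS_m$ set
$$\Delta_\sigma := \textup{cone}\bigl(\bar e_{\{\sigma(1)\}},\, \bar e_{\{\sigma(1),\sigma(2)\}},\, \ldots,\, \bar e_{\{\sigma(1),\ldots,\sigma(m-1)\}}\bigr).$$
Under the identification $(N/N')_\bbR \cong \{x \in \bbR^m : \sum x_i = 0\}$ by orthogonal projection, $\Delta_\sigma$ is the closed Weyl chamber $\{x_{\sigma(1)} \geq \cdots \geq x_{\sigma(m)}\}$ with generators the corresponding fundamental weights, so the collection $\{\Delta_\sigma\}_{\sigma \in \frakS_m}$ and their faces form the Weyl-chamber fan of $A_{m-1}$.

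The remaining step, and the main obstacle, is to verify that this fan equals the KSZ common refinement. For each $\sigma$ and each $k \in \{0,1,\ldots,m\}$ let $\epsilon(\sigma,k) \in \{\pm 1\}^m$ be $+1$ exactly on $\{\sigma(1),\ldots,\sigma(k)\}$. A short computation with dual cones---$\bar C_\epsilon^\vee = \{y \in \bbR^m : \sum y_i = 0,\ \epsilon_i y_i \geq 0\}$ sitting inside the root lattice of $A_{m-1}$---shows that the Minkowski sum $\sum_k \bar C_{\epsilon(\sigma,k)}^\vee$ is generated by the simple roots $e_{\sigma(k)} - e_{\sigma(k+1)}$ for $1 \leq k \leq m-1$, hence coincides with $\Delta_\sigma^\vee$; dualizing gives $\Delta_\sigma = \bigcap_k \bar C_{\epsilon(\sigma,k)}$, so the Weyl-chamber fan refines the projected fan. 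Conversely each $\bar C_\epsilon$ is a union of chambers $\Delta_\sigma$ indexed by $\sigma$ whose first $|S_+|$ values permute $S_+ := \{i : \epsilon_i = +1\}$; this amounts to checking that no point of $\bar C_\epsilon$ lies outside all such $\Delta_\sigma$, which follows from the relation $\sum \bar e_i = 0$ by rewriting any expression with mixed signs in terms of the $\bar e_I$'s. The two inclusions together identify the common refinement with the Weyl-chamber fan, proving the lemma.
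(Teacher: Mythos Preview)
Your proof is correct and follows essentially the same route as the paper's. Both arguments identify the one-parameter sublattice with $\bbZ\cdot(1,\ldots,1)$ and then recognize that the projected orthant cones $\bar C_\epsilon$ in $\bbR^m/\bbR(1,\ldots,1)$ are governed by the ordering of the coordinates $y_i$, so that their common refinement is the Weyl-chamber decomposition. The paper phrases this via the KSZ equivalence-class definition of the quotient fan---tracking, for a generic coset $y+\bbR e$, which cones $\sigma_{I,J}$ the line meets, and observing that this list depends only on the permutation sorting the $y_i$---whereas you phrase it as a common-refinement problem and handle the inclusion $\Delta_\sigma=\bigcap_k \bar C_{\epsilon(\sigma,k)}$ by passing to dual cones and Minkowski sums. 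That detour through duality is the only real difference; the underlying computation (namely that $\bar C_\epsilon=\{\bar y:y_i\ge y_j\text{ for }i\in S_+,\ j\in S_-\}$) and the conclusion are the same.
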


\begin{proof} The toric variety $(\bbP^1)^m$ is defined by the complete fan $\Sigma$ in the lattice $\bbZ^{m}$ with 1-skeleton formed by the rays $\bbR_{\ge 0}e_i$ and $\bbR_{\le 0}e_i, \ i = 1,\ldots,m$.  The action of $\SO(2)$ on the torus $(\bbC^*)^m\subset (\bbP^1)^m$ is defined by the surjection of the lattices $\bbZ^m\to \bbZ$ given by the map $e_i\to e_1, i = 1,\ldots,m$. Thus the  lattice $M$ of characters of the torus $(\bbC^*)^{m-1}$ acting on $(\bbC^*)^m/\bbC^*$ can be identified with the sublattice of $\bbZ^m$ spanned by the vectors $\alpha_1 = e_1-e_2,\ldots,\alpha_{m-1} = e_{m-1}-e_m$. This is the root lattice of type $A_{m-1}$. The dual lattice $N$ is the  lattice $\bbZ^{m}/\bbZ e$, where $e = e_1+\ldots+e_{m}$. 
The quotient fan is defined as follows. For any coset $y \in N_\bbR = \bbR^m/\bbR e$, one considers the set
$$\calN(y) = \{\sigma\in \Sigma: y+\bbR e \cap \sigma \ne \emptyset\}.$$
A coset  $y+\bbR e\in N_\bbR$ is called admissible if $\calN(\phi)\ne \emptyset$.  Two admissible cosets  $y+\bbR e$ and $y'+\bbR e$ are called equivalent if $\calN(y) = \calN(y')$. The closure if each equivalence class of admissible cosets is a rational polyhedral convex cone in $N_\bbR$ and the set of such cones  defines a fan  $\Sigma'$ in $N_\bbR$ which is the quotient fan. 

In our case, $\Sigma$ consists of open faces  $\sigma_I$ of the $2^m$ $m$-dimensional cones  
$$\sigma_{I,J} = \{(x_1,\ldots,x_m)\in \bbR^m: (-1)^{\delta_I}x_i \ge 0, (-1)^{\delta_J}x_i \le 0\},$$
where $I,J$ are subsets of $[1,m]$ such that  $I\cup J = [1,m]$ and $\delta_K$ is a delta-function of a subset $K$ of $[1,m]$. The  cones  of maximal dimension correspond to  pairs of complementary subsets $I,J$. The $k$-dimensional cones correspond to the pairs $I,J$ with $\#I\cap J = m-k$.

 Let $y = (y_1,\ldots,y_m)\in \bbR^m$  with $y_i \ne y_j, i \ne j$  and let $s\in \frakS_m$ be a   unique permutation such that $y_{s(1)}> y_{s(2)}> \ldots > y_{\sigma(m)}$. Then $y+\bbR e$ intersects $\sigma_{I,J}$ if and only if $s(I) = \{1,\ldots,k\}$ for some $k \le m$ or $\emptyset$ and  $J = [1,m]\setminus I$. Since $\frakS_m$ has only one orbit on the set of  pairs of complementary subsets of $[1,m]$, we see that the interiors of maximal cones in the quotient fan are obtained from the image of the subset
 $$\{y \in \bbR^m: y\cdot (e_i-e_{i+1}) \ge 0, i = 1,\ldots,m-1\}$$
 in $N_\bbR$. This is exactly one of the Weyl chambers in $N_\bbR$. All other cones in the quotient fan are translates of the faces of the closure of this chamber. This proves the assertion.
 \end{proof}
  
It is known that the toric variety $X(A_{m-1})$ is isomorphic to the blow-up of $\bbP^{m-1}$ of the faces of the coordinate simplex (see, for example, \cite{DL}, Lemma 5.1). Let 
$$\tau_{m-1}:\bbP^{m-1}\dasharrow \bbP^{m-1}, \ [t_0,\ldots,t_{m-1}]\mapsto [1/t_0,\ldots,1/t_{m-1}]$$
be the standard Cremona transformation of $\bbP^{m-1}$. The variety $X(A_{m-1})$ is isomorphic to a minimal resolution of indeterminacy points of 
$$\begin{pmatrix}F_0(x)&F_1(x)&\ldots&F_{m-1}(x)\\
y_0&y_1&\ldots&y_{m-1}\end{pmatrix},$$
where  $F_i(x) = (x_0\cdots x_{m-1})/x_i$. It follows from this formula, that the standard involution $\tau_{m-1}$  of $X(A_{m-1})$ is induced by the switching involution $\iota$ of the factors of $\bbP^{m-1}\times \bbP^{m-1}$. The image of composition of the embedding $X(A_{m-1})$ in $\bbP^{m-1}\times \bbP^{m-1}$ and the Segre embedding $\bbP^{m-1}\times \bbP^{m-1}\hookrightarrow \bbP^{m^2-1}$ is equal to the intersection of the Segre variety with the linear subspace of dimension $m-1$ defined by 
\beq\label{eqq}
t_{00} = t_{11} =\ldots = t_{m-1m-1},
\eeq
where we use the coordinates  $t_{ij} = x_iy_j$ 
 in $\bbP^{m^2-1}$. So $X(A_{m-1})$ is isomorphic to a closed smooth subvariety of $\bbP^{m(m-1)}$ of degree $\binom{2(m-1)}{m-1}$.

Consider the embedding of $(\bbP^{m-1}\times \bbP^{m-1})/\la \iota \ra$ in $\bbP^{\tfrac{1}{2}(m+2)(m-1)}$ given by the linear system of symmetric divisors of type $(1,1)$. Its image is equal to the secant variety of the Veronese variety $v_2(\bbP^{m-1})$ isomorphic to the symmetric square $\Sym^2\bbP^{m-1}$ of $\bbP^{m-1}$. The image of $X(A_{m-1})/\la \tau_{m-1}\ra$ in $\bbP^{\half (m+2)(m-1)}$ is equal to the intersection of the secant variety  with a linear subspace $L$ of codimension $m-1$ given by \eqref{eqq}. It is known that the singular locus of the secant variety is equal to the Veronese variety. The singular locus of the embedded $X(A_{m-1})/\la \tau_{m-1}\ra$ is equal to the intersection of $L$ with the Veronese subvariety $v_2(\bbP^{m-1})$ and consists of $2^{m-1}$ points. We have $\dim L = \tfrac{1}{2}m(m-1)$ and $\deg  \Sym^2\bbP^{m-1}) = \half \binom{2(m-1)}{m-1}$. So  $X(A_{m-1})/\la \tau_{m-1}\ra$ embeds into 
$\bbP^{\half m(m-1)}$ as  a subvariety of degree $\half \binom{2(m-1)}{m-1}$ with $2^{m-1}$ singular points locally isomorphic to the singular point of the cone over the Veronese variety $v_2(\bbP^{m-2})$. We call it the \emph{generalized Cayley cubic} and denote it by $\Cay_{m-1}$. 

It follows from above that $\Cay_{m-1}$ is isomorphic to the subvariety of the projective space of symmetric $m\times m$-matrices with the conditions that the rank is equal to 2 and the diagonal elements are equal.

In the case when $m = 3$, the variety $X(A_2)$ is a Del Pezzo surface of degree 6, the blow up of 3 non-collinear points in $\bbP^2$ and $\Cay_2$ is isomorphic to the Cayley 4-nodal cubic surface in $\bbP^3$. The variety  $\Cay_3$  is a 3-dimensional subvariety of $\bbP^6$ of degree 10 with 8 singular points locally isomorphic to the cone over the Veronese surface. 

It is known that the Chow quotient  birationally dominates all the GIT-quotients \cite{Kapranov2}, Theorem (0.4.3). 
So we have a $\frakS_m$-equivariant birational morphism
$$\Phi_m:X(A_{m-1}) \to \bfS\bfO_1^m.$$
which, after dividing by the involution $\tau_{m-1}$, defines a $\frakS_m$-equivariant birational morphism
$$\Phi_m^c:\Cay_{m-1}\to \bfO_1^m.$$
For example, take $m = 3$. The variety  $\Cay_2$ is the Cayley 4-nodal cubic, the morphism $\Phi_m$ is an isomorphism.  Take $m = 4$. We know from Proposition \ref{P3.4} that the variety 
$\bfS\bfO_1^4$ has 6 singular points corresponding to strictly semi-stable points defined by vanishing of two complementary principal matrices of the Gram matrix. They are represented by the point sets of the form $(a,a,b,b)$, where $a, b \in \{0,\infty\}$.  The morphism $\Phi_4$ resolves these points with the exceptional divisors equal to the exceptional divisors of $X(A_{3}) \to\bbP^{3}$ over the edges of the coordinate tetrahedron.  The  morphism $\Phi_4^c$ resolves 3 singular points of $\bfO_4^1$ and leaves unresolved the 8 singular points coming from the fixed points of $\tau_3$. Altogether, the variety $\bfO_4^1$ has 11 singular points: 8 points locally isomorphic to the cones of the Veronese  surface and 3 conical double points. The latter three singular points correspond to strictly semi-stable orbits.

It is known  that $X(A_{m-1})$ is isomorphic to the closure of a general maximal torus orbit in $\PGL_m/B$, where $B$ is a Borel subgroup \cite{Klyachko}, Theorem 1. Let $P$ be a parabolic subgroup containing $B$ defined by a subset  $S$ of the set of simple roots, and $W_S$ be the subgroup of the Weil group $\frakS_m$ generated by simple roots in $S$. Let $\phi_S: \PGL_m/B ]\to \PGL_m/P$ be the natural projection. The image of $X(A_{m-1})$ in $\PGL_m/P$ is a toric variety $X(A_{m-1})_S$ defined by the fan whose maximal cones are $\frakS_m$-translates of the cone $W_S\sigma$, where $\sigma$ a fundamental chamber (\cite{FH}, Theorem 1). The morphism $\phi_S:X(A_{m-1})\to X(A_{m-1})_S$ is a birational morphism which is easy to describe.

We believe, but could not find a proof, that for odd $m$, the morphism $\Phi_m$ and $\Phi_m^c$ are isomorphisms. If $m$ is even, then the morphism $\Phi_m$ is equal to the morphism $\phi_S$, where $S$ is the complement of the central vertex of the Dynkin diagram of type $A_{m-1}$.

\section{Rational functions}

First we shall prove the rationality of our moduli spaces.

\begin{theorem}  The varieties $\bfO_n^m$ are rational varieties. 
\end{theorem}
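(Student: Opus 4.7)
For $m \le n+1$, Theorem~1 identifies $\bfO_n^m$ with the toric variety $\bbS_m$, and toric varieties are rational. The plan for $m > n+1$ is to reduce to the case $m=n+1$ by a no-name-lemma argument on the linear actions.

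Consider the projection $\pi\colon V^m = V^{n+1}\times V^{m-n-1}\to V^{n+1}$ onto the first $n+1$ factors. This is equivariant both for the diagonal action of $\Or(V)$ and for the coordinatewise action of $T=(\bbC^*)^m$, which commute. Any orthogonal map that fixes a linear basis of $V$ is the identity, so $\Or(V)$ acts generically freely on $V^{n+1}$; let $U\subset V^{n+1}$ be the ($T$-invariant) dense open of free orbits. Then $\pi^{-1}(U) = U\times V^{m-n-1}$ is a trivial $\Or(V)$-equivariant vector bundle over $U$ of rank $(n+1)(m-n-1)$. The no-name lemma then asserts that the quotient $\pi^{-1}(U)/\Or(V)\to U/\Or(V)$ is a Zariski-locally trivial vector bundle, in particular birationally a product. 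This product decomposition can be made $T$-equivariant by trivializing via a Cholesky-type section $\Sym_{n+1}\dashrightarrow V^{n+1}$ of the Gram map, which is $T_1$-equivariant for $T_1=(\bbC^*)^{n+1}$ acting on the first $n+1$ factors. The FFT also identifies $V^{n+1}/\!/\Or(V)$ with $\Sym_{n+1}$, and hence $V^{n+1}/\!/(\Or(V)\times T_1)$ with the toric variety $\bfO_n^{n+1}$.

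Writing $T = T_1\times T_2$ with $T_2 = (\bbC^*)^{m-n-1}$ on the remaining factors, and taking quotients by $T$ on both sides, I would conclude
\[
\bfO_n^m \;\sim_{\mathrm{bir}}\; V^m /\!/ (\Or(V)\times T) \;\sim_{\mathrm{bir}}\; \bigl(V^{n+1}/\!/(\Or(V)\times T_1)\bigr) \times \bigl(V^{m-n-1}/T_2\bigr) \;\cong\; \bfO_n^{n+1}\times (\bbP^n)^{m-n-1},
\]
a product of rational varieties by the case $m=n+1$ above and the obvious rationality of the second factor, hence rational. The main technical step is verifying $T$-equivariance of the trivialization; this is routine once the Cholesky section is chosen appropriately, and the kernel $\{\pm I\}$ arising in passing from $\Or(V)$ to $\PO(V)$ sits in the diagonal subtorus of $T$ and poses no birational obstruction.
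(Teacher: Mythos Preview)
Your overall strategy---reduce to the toric case $m\le n+1$ via a no-name-type argument---is reasonable and quite different from the paper's proof, which instead projects $\bfO_n^m\dashrightarrow\bfO_n^{m-1}$, identifies the generic fibre as a Severi--Brauer variety, and trivializes it using the diagonal rational section $(x_1,\ldots,x_{m-1})\mapsto(x_1,\ldots,x_{m-1},x_{m-1})$. However, your argument has a genuine gap at the ``Cholesky-type section'' step.

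The Cholesky factorization $A=LL^{T}$ requires extracting square roots of the leading principal minors of $A$, so it does not define a \emph{rational} map $\Sym_{n+1}\dashrightarrow V^{n+1}$. In fact no rational section of the Gram map $\Phi\colon V^{n+1}\to\Sym_{n+1}$ exists at all: over the function field $K=\bbC(\Sym_{n+1})$ the generic fibre is the $\Or(V)$-torsor $\{M:M^{T}M=A\}$, which has a $K$-point iff the quadratic form $x^{T}Ax$ is $K$-isomorphic to the standard form; but their discriminants differ by $\det A$, an irreducible polynomial in the $A_{ij}$ and hence a non-square in $K$. One can also see directly that no $T_1$-equivariant trivialization of the descended bundle with $T_1$ acting \emph{trivially} on the fibre can exist: any rational semi-invariant on $\Sym_{n+1}$ has character $\prod z_i^{d_i}$ with $\sum d_i$ even, so the weight-one characters $\chi_i$ needed to untwist the row-scaling action on the fibre are unattainable. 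Thus your displayed product decomposition $\bfO_n^m\sim_{\mathrm{bir}}\bfO_n^{n+1}\times(\bbP^n)^{m-n-1}$ is not established by the argument given.

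Your approach can be repaired without the section. Using the FFT/SFT directly, $V^m/\!/\Or(V)\cong\Sym_m(n+1)$, and the rational map $A\mapsto(A',B)$ sending a rank-$(n+1)$ symmetric matrix to its top-left $(n{+}1)\times(n{+}1)$ block $A'$ and its upper off-diagonal block $B\in\Mat_{(n+1)\times(m-n-1)}$ is a $T$-equivariant birational isomorphism $\Sym_m(n+1)\sim_{\mathrm{bir}}\Sym_{n+1}\times\Mat_{(n+1)\times(m-n-1)}$ (the bottom-right block being forced to equal $B^{T}(A')^{-1}B$). The right-hand side is an affine space with a linear $T$-action, so its $T$-quotient is toric and therefore rational; hence $\bfO_n^m$ is rational. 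Note that in this honest trivialization $T_1$ still acts nontrivially on the $\Mat$ factor, so you do not get the clean product you wrote, but you do get rationality.
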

\begin{proof}  The assertion is trivial when $m = n+1$ because in this case the variety is isomorphic to the toric variety $\bbS_m$. If  $m < n+1$, a general point set spans $\bbP(W)$, where $W$ is a subspace of $\bbP(V)$ of dimension $m$. Since $\Or(V)$ acts transitively on a dense orbit of the Grassmannian $G(m,V)$ (the subspaces containing an orthogonal basis), we may transform a general set to a subset of a fixed $\bbP(W)$. This shows that the varieties $\bfO_n^m$ and  $\bfO_{m-1}^m$ are birationally isomorphic. If $m > n+1$, we use  the projection map $\bfO_n^m\dasharrow \bfO_n^{m-1}$ onto the first $m-1$ factors.   It is a rational map with general fibre isomorphic to $\bbP^n$. Its geometric generic fibre is isomorphic to the projective space over the algebraic closure of the field $K$ of rational functions of $\bfO_n^{m-1}$. In other words, the generic fibre is a Severi-Brauer variety over $K$ (see \cite{Serre}, Ch. X, \S 6). The rational map has a rational section $(x_1,\ldots,x_{m-1}) \mapsto (x_1,\ldots,x_{m-1},x_{m-1})$. Thus the generic fibre is a Severi-Brauer variety with a rational point, hence isomorphic to the projective space over $K$ (loc. cit., Exercise 1). Thus the field of rational functions on $\bfO_n^m$ is a purely transcendental extension of $K$, and by induction on $m$, we obtain that $\bfO_n^m$ is rational. 
\end{proof}

We know that the ring $R(n;m)^{(2)}$ is generated by  determinantal terms $d_\sigma$ of the Gram matrix of $m$ points. If we take $\sigma$ to be a transposition $(ab)$, then the ratio $d_\sigma/d_{(12\ldots m)}$ is equal to 
\beq\label{rat1_0}
R_{ab} = [ab]^2 /[aa][bb].
\eeq
More generally for any cyclic permutation $\sigma = (a_1\ldots a_k)$ we can do the same to obtain the rational invariant function
\beq\label{rat1_1}
R_{a_1\ldots a_k} = \frac{[a_1a_2] \cdots [a_{k-1}a_k] [a_{k-1}a_k][a_ka_1]}{[a_1a_1]\cdots [a_ka_k]}.
\eeq
Writing any permutation as a product of cycles, we see that the field of rational functions on $\bfO_n^m$ is generated by functions $R_{a_1\ldots a_k}$. Note that 
$$ R_{a_1\ldots a_k}^2 = R_{a_1a_2}\cdots R_{a_{k}a_1}.$$
We do not know whether a transcendental basis of the field can be chosen among the functions $R_{a_1\ldots a_k}$ or their ratios.

\section{Complex spheres}
A $(n-1)$-dimensional sphere is given by equation  in $\bbR^{n}$ of the form
$$\sum_{i=1}^{n}(x_i-a_i)^2 = R^2.$$
After homogenizing, we get the equation in $\bbP^{n}(\bbR)$
\beq\label{sphere1}
\textsf{Q}:\sum_{i=1}^n(x_i-a_ix_0)^2-R^2x_0^2 = 0.
\eeq
The hyperplane section $x_0 = 0$ is a sphere in $\bbP^{n-1}(\bbR)$ with equation
\beq\label{sphere2}
\textsf{Q}_0:\sum_{i=1}^nx_i^2 = 0.
\eeq
The quadric has no real points, and for this reason, it is called the imaginary sphere. Now we abandon the real space and replace $\bbR$ with $\bbC$. Equation \eqref{sphere1} defines  a \emph{complex sphere}. A coordinate-free definition of a complex sphere is a nonsingular quadric hypersurface $Q$ in $\bbP^{n}$ intersecting a fixed hyperplane $H_0$ along a fixed nonsingular quadric $\textsf{Q}_0$ in $H_0$. In the real case, we additionally assume that $\textsf{Q}_0(\bbR) =\emptyset$. If we choose coordinates such that $\calQ_0$ is given by equation \eqref{sphere2},  then a quadric in $\bbP^{n+1}(\bbC)$ containing the imaginary sphere has an equation
$$b(\sum_{i=1}^nx_i^2) -2x_0(\sum_{i=0}^na_ix_i)= 0.$$
If $b\ne 0$,  we may assume that $b = 1$ and  rewrite the equation in the form
$$\sum_{i=1}^n(x_i-a_ix_0)^2-(2a_0+\sum_{i=1}^na_i^2)x_0^2 = 0,$$
so it is a complex sphere. 
Consider the rational map given by the linear system of quadrics in $\bbP^n$ containing the fixed quadric $\textsf{Q}_0$ with equation \eqref{sphere2}. We can 
choose a basis formed  by the quadric 
$\textsf{Q}_0$ and the quadrics $V(x_0x_i), i = 0,\ldots,n$.  This defines  a rational map 
$\bbP^n\dasharrow \bbP^{n+1}$ given by formulae
$$[x_0,\ldots,x_n] \mapsto [t_0,\ldots,t_{n+1}] = [-2x_0^2,-2x_0x_1,\ldots,-2x_0x_n,\sum_{i=1}^n x_i^2].$$
The  image of this map is a nonsingular quadric  in $\bbP^{n+1}$ given by the equation
$\calQ = V(q)$, where
\beq\label{fq}
q =  2t_0t_{n+1}+\sum_{i=1}^{n}t_i^2 = 0.
\eeq
We call $\calQ$   the \emph{fundamental quadric}. The quadratic form $q$  defines  a symmetric bilinear form on $V$ whose value on vectors $v,w\in V$ are denoted by $\la v,w\ra$. The pre-image of a hyperplane section $\sum A_it_i = 0$ is a complex sphere, or its degeneration. For example,  the sphere corresponding to a hyperplane which is tangent to the quadric has the zero radius, and hence, it is  defined by a singular quadric. 

 The idea of replacing a quadratic equation of a sphere by a linear equation goes back to Moebius and Chasles in 1850, but was developed by Klein and Lie twenty years later. The spherical geometry, as it is understood in Klein's  Erlangen Program becomes isomorphic to the orthogonal geometry. More precisely, the \emph{Inversive group} of birational transformations of $\bbP^n$ sending spheres to spheres or their degenerations is isomorphic to the projective orthogonal group $\PO(n+2)$. 

Let us use the quadric $\calQ$ to define a \emph{polarity duality} between points and hyperplanes  in $\bbP^{n+1}$.   If we use the equation of $\calQ$ to define a symmetric bilinear form in $\bbC^{n+2}$, the polarity is just the orthogonality of lines and hyperplanes with respect to this form. 
Under the polarity, hyperplanes become points, and hence spheres in $\bbP^n$ can be identified with points in $\bbP^{n+1}$.   

Explicitly, a point $\alpha = [\alpha_0,\ldots,\alpha_{n+1}]\in \bbP^{n+1}$ defines the sphere

\beq\label{sphere3}
S(\alpha): \alpha_0\sum_{i=1}^nx_i^2-2\sum_{i=1}^{n}\alpha_ix_0x_i-2\alpha_{n+1}x_0^2 = 0.
\eeq
By definition, its \emph{center} is the point $c = [\alpha_0,\alpha_1,\ldots,\alpha_n]$, its \emph{radius square} $R^2$ is defined by the formula
\beq\label{radius1}
\alpha_0^2R^2 = \sum_{i=1}^n \alpha_i^2+2\alpha_0\alpha_{n+1} = q(\alpha_0,\ldots,\alpha_{n+1}). 
\eeq

Computing the discriminant $D$ of the quadratic form in \eqref{sphere3}, we find
\beq\label{discr}
D = \alpha_{0}^{n-1}(2\alpha_0\alpha_{n+1}+\sum_{i=1}^n \alpha_i^2) =  \alpha_{0}^{n-1}q.
\eeq
This proves the following.

\begin{proposition} A complex sphere $S(\alpha)$ is singular if and only if $a_0R^2 = 0$, or,equivalently, the point  $\alpha \in\bbP^{n+1}$ lies on the fundamental quadric $\calQ$. If $a_0 = 0$, it contains the hyperplane at infinity. If $a_0 \ne 0$ and $R^2 = 0$, the center  is its unique singular point.
\end{proposition}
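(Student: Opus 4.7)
The proof rests on the determinant computation already displayed in \eqref{discr}: the discriminant of the quadratic form defining $S(\alpha)$ equals (up to sign) $\alpha_0^{n-1} q(\alpha)$, where $q$ is the fundamental quadratic form \eqref{fq}. First I would record the standard fact that a quadric hypersurface in $\bbP^{n+1}$ is singular if and only if the symmetric matrix of its defining quadratic form is degenerate, i.e.\ exactly when this discriminant vanishes. Since $\alpha\ne 0$ as a projective point, $\alpha_0^{n-1} q(\alpha) = 0$ is equivalent to $\alpha_0 = 0$ or $q(\alpha) = 0$; in view of the relation $\alpha_0^2 R^2 = q(\alpha)$ from \eqref{radius1}, this is in turn equivalent to the condition ``$\alpha_0 R^2 = 0$'' (with $R^2$ interpreted through \eqref{radius1} once $\alpha_0 \ne 0$) and to the condition that $\alpha$ lies on the fundamental quadric $\calQ$. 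This gives the first assertion.

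Next I would dispose of the two geometric sub-statements separately. If $\alpha_0 = 0$, the defining equation \eqref{sphere3} factors immediately as
$$-2x_0\bigl(\textstyle\sum_{i=1}^n \alpha_i x_i + \alpha_{n+1} x_0\bigr) = 0,$$
exhibiting $S(\alpha)$ as the union of the hyperplane at infinity $\{x_0 = 0\}$ with a second hyperplane, so in particular $S(\alpha) \supset \{x_0 = 0\}$. In the complementary case $\alpha_0 \ne 0$ with $R^2 = 0$, I rescale so that $\alpha_0 = 1$ and complete the square to rewrite \eqref{sphere3} as $\sum_{i=1}^n (x_i - \alpha_i x_0)^2 = 0$; computing the gradient forces $x_i = \alpha_i x_0$ for $i = 1,\ldots,n$ at any singular point, and since $x_0 = 0$ together with $x_i = 0$ for all $i$ is excluded projectively, the unique singular point is $[1:\alpha_1:\cdots:\alpha_n]$, which is the center of $S(\alpha)$ by definition.

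There is essentially no serious obstacle: the statement is a direct interpretation of \eqref{discr} combined with two elementary manipulations (a factorization and a completion of squares). The only subtlety worth flagging is that the hypothesis ``$\alpha_0 R^2 = 0$'' must be interpreted projectively, since $R^2$ itself is only well-defined once an affine normalization $\alpha_0 \ne 0$ is chosen. I would therefore phrase the core equivalence through the unambiguous condition $q(\alpha) = 0$, which is exactly what \eqref{discr} supplies, and recover the formulation in terms of $R^2$ only in the affine chart where it makes literal sense.
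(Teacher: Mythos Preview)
Your proposal is correct and follows exactly the paper's approach: the paper offers no separate proof but simply points to the discriminant computation \eqref{discr} with the words ``This proves the following.'' You supply the details the paper omits, in particular the explicit factorization when $\alpha_0=0$ and the gradient computation identifying the center as the unique singular point when $\alpha_0\ne 0$ and $R^2=0$, and you correctly flag the projective-normalization subtlety in the phrase ``$\alpha_0 R^2=0$.''
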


\begin{remark} Spheres of radius zero are points on the fundamental quadric. Thus the spaces $\bfO_n^m$ contain as its closed subsets the moduli space of $m$ points on the fundamental quadric modulo the automorphism group of the quadric. For example, when $n = 2$, this is the moduli space 
$P_1^m = (\bbP^1)^m/\!/\SL(2)$ intensively studied in many papers (see, for example, \cite{DO}, \cite{Howard}). 

\end{remark}

Many geometrical mutual properties of complex spheres are expressed by vanishing of some orthogonal invariant of point sets in $\bbP^{n+1}$. We give here only some simple examples. 

We define two complex spheres in $\bbP^n$ to be \emph{orthogonal} to each other if the corresponding points in $\bbP^{n+1}$ are \emph{conjugate} in the sense that one point lies on the polar hyperplane to another point.

\begin{proposition} Two real spheres in $\bbR^{n}$ are orthogonal to each other (i.e. the radius-vectors at their intersection points are orthogonal) if and only if the corresponding complex spheres are orthogonal in the sense of the previous definition.
\end{proposition}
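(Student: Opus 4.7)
The plan is a one-line calculation with the polarization of the fundamental form. First I would write down the symmetric bilinear form on $\bbC^{n+2}$ associated to the fundamental quadric $q = 2t_0t_{n+1}+\sum_{i=1}^n t_i^2$, namely
\[
\la \alpha,\beta\ra \;=\; \alpha_0\beta_{n+1}+\alpha_{n+1}\beta_0+\sum_{i=1}^n \alpha_i\beta_i.
\]
Next, from \eqref{sphere3} and \eqref{radius1}, a genuine real sphere in $\bbR^n$ with center $c = (c_1,\ldots,c_n)$ and radius $R$ corresponds to the point $\alpha \in \bbP^{n+1}$ normalized by $\alpha_0 = 1$, with $\alpha_i = c_i$ for $1\le i\le n$ and $\alpha_{n+1} = \tfrac{1}{2}(R^2 - |c|^2)$. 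This affine normalization is legitimate because a finite real sphere does not contain the hyperplane at infinity, so $\alpha_0\ne 0$.

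Substituting these expressions for two real spheres $(c,R)$ and $(c',R')$ with associated points $\alpha,\beta$, I expect the identity
\[
\la \alpha,\beta\ra \;=\; c\cdot c' + \tfrac{1}{2}(R^2-|c|^2)+\tfrac{1}{2}(R'^2-|c'|^2) \;=\; \tfrac{1}{2}\bigl(R^2+R'^2-|c-c'|^2\bigr).
\]
Hence $\la \alpha,\beta\ra = 0$ is equivalent to $|c-c'|^2 = R^2+R'^2$, which by the Pythagorean theorem is exactly the classical condition that the radii drawn from the two centers to a common point of intersection are perpendicular. Both implications of the proposition are obtained simultaneously from this identity, together with the symmetry of $\la\,,\,\ra$ which matches the (symmetric) notion of polar conjugacy used in the definition of orthogonal complex spheres.

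There is essentially no analytic obstacle in this argument; the only point worth checking is that the word ``intersection'' on the classical side is not vacuous, i.e.\ that the real spheres actually meet when $|c-c'|^2 = R^2+R'^2$. This is immediate from the chain of inequalities $|R-R'|^2 \le R^2 + R'^2 \le (R+R')^2$ (equivalent to $2RR'\ge 0$ on both sides), which are precisely the two triangle inequalities required for two real spheres in $\bbR^n$ to have common points.
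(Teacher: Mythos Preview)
Your argument is correct and follows essentially the same route as the paper: both normalize with $\alpha_0=1$, identify $\alpha_i=c_i$ and $\alpha_{n+1}=\tfrac12(R^2-|c|^2)$, and reduce $\la\alpha,\beta\ra=0$ to the classical condition $|c-c'|^2=R^2+R'^2$ (the paper reaches the equivalent form $2\sum a_ib_i-\sum a_i^2-\sum b_i^2+r^2+r'^2=0$ by expanding $(p-c)\cdot(p-c')=0$ at an intersection point, whereas you invoke Pythagoras directly). Your added observation that $|R-R'|\le |c-c'|\le R+R'$ guarantees a real intersection point is a nice touch the paper omits.
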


\begin{proof} Let 
$$\sum_{i=1}^n(x_i-a_i)^2 = r^2,\quad \sum_{i=1}^n(x_i-b_i)^2 = r'{}^2,$$
be two orthogonal spheres. Let $(c_1,\ldots,c_n)$ be their intersection point. Then we have 
$$0 = \sum_{i=1}^n (c_i-a_i)(c_i-b_i) =  \sum_{i=1}^nc_i^2-\sum_{i=1}^n (a_i+b_i)c_i+ \sum_{i=1}^na_ib_i,$$
$$\sum_{i=1}^n(c_i-a_i)^2 = r^2,\quad \sum_{i=1}^n(c_i-b_i)^2 = r'{}^2.$$
This gives the equality
$$2\sum_{i=1}^na_ib_i-\sum_{i=1}^na_i^2-\sum_{i=1}^nb_i^2+r^2+r'{}^2 = 0.$$
It gives  is a necessary and a sufficient condition that  two spheres intersect orthogonally. It is clear that the condition does not depend on a choice of an intersection point. The corresponding complex spheres correspond
to points $[1,a_1,\ldots,a_n, \frac{1}{2}(r^2-\sum a_i^2)]$ and $[1,b_1,\ldots,b_n, \frac{1}{2}(r^2-\sum b_i^2)]$. The condition that two points $[\alpha_0,\ldots,\alpha_{n+1}]$ and $[\alpha_0,\ldots,\alpha_{n+1}]$ are conjugate is 
$$\alpha_0\beta_{n+1}+\alpha_{n+1}\beta_0+\sum_{i=1}^n \alpha_i\beta_i = 0.$$
So we see that the two conditions  agree.
\end{proof} 

 For convenience of notation, we denote  $x = \bbC v\in \bbP^n$ by $[v]$. We use the symmetric form $\la v,w\ra$ in $V$ defined by the fundamental quadric. 

We have learnt the statements of the following two propositions from \cite{Krasner}.

\begin{proposition}\label{P2.3}Two complex spheres $S([v])$ and $S([w])$ are tangent at some point if and only if 
$$\det \begin{pmatrix} \la v,v\ra&\la v,w\ra\\
\la w,v\ra&\la w,w\ra\end{pmatrix} = 0.
 $$
\end{proposition}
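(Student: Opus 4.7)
The plan is to translate the tangency condition for two complex spheres in $\bbP^n$ into a condition on the line $\ell = \la [v], [w] \ra$ in $\bbP^{n+1}$ via the polarity induced by the fundamental quadric $\calQ$. First I would use the correspondence between spheres in $\bbP^n$ and hyperplane sections of $\calQ$: the sphere $S([v])$ corresponds to $\calQ \cap H_v$, where $H_v$ is the polar hyperplane of $[v]$. Under the birational map $\bbP^n \dashrightarrow \calQ$, the intersection of two spheres corresponds to the quadric $\calQ \cap L$ in the codimension-two linear subspace $L = H_v \cap H_w$. Tangency of the two spheres at a common smooth point is equivalent to the tangent hyperplanes agreeing there, which is equivalent to the intersection $\calQ \cap L$ being singular at the corresponding point.

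Next I would determine when $\calQ \cap L$ is singular. A point $p \in \calQ \cap L$ is a singular point of the intersection iff $L \subset T_p \calQ$, equivalently iff $p$ lies in the polar of $L$. Since $L = H_v \cap H_w$ is the polar of the line spanned by $[v]$ and $[w]$, the polar of $L$ is precisely $\ell = \la [v], [w] \ra$. Hence tangency is equivalent to $\ell \cap L$ being non-empty. Writing $p = \lambda v + \mu w \in \ell$, the conditions $\la v, p\ra = \la w, p\ra = 0$ become the linear system
$$\begin{pmatrix} \la v,v\ra & \la v,w\ra \\ \la v,w\ra & \la w,w\ra \end{pmatrix} \begin{pmatrix} \lambda \\ \mu \end{pmatrix} = 0,$$
which has a nonzero solution iff the Gram determinant vanishes; any such $p$ then automatically lies on $\calQ$, since $\la p, p\ra = \lambda \la v, p\ra + \mu \la w, p\ra = 0$.

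The only delicate point is the first step, namely confirming that tangency of the two spheres really corresponds to singularity of the intersection quadric $\calQ \cap L$. In principle one must check behavior along the exceptional locus of the birational identification $\bbP^n \dashrightarrow \calQ$ (coming from the hyperplane at infinity and the imaginary sphere), but this is a closed condition in $([v], [w]) \in (\bbP^{n+1})^2$, so verifying the equivalence generically suffices, and there it reduces to the elementary fact that two smooth quadric hypersurfaces in $L$ have singular intersection at a point $p$ precisely when their tangent hyperplanes at $p$ coincide.
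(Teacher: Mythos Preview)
Your argument is correct and takes a genuinely different route from the paper. The paper works in $\bbP^n$ with the pencil of quadrics $\lambda\phi_1 + \mu\phi_2$ generated by the two spheres: it invokes the classical fact that tangency corresponds to a multiple root of the discriminant, then uses the explicit factorization $D(\lambda v+\mu w) = (\lambda\alpha_0+\mu\beta_0)^{n-1}\, q(\lambda v+\mu w)$ from \eqref{discr} to isolate the quadratic factor $q(\lambda v+\mu w) = \lambda^2\la v,v\ra + 2\lambda\mu\la v,w\ra + \mu^2\la w,w\ra$, whose discriminant is exactly the Gram determinant. You instead pass to $\bbP^{n+1}$ via polarity: tangency becomes singularity of the codimension-two section $\calQ\cap L$, and the singular locus is the intersection of $\calQ$ with the polar line $\ell = \overline{[v],[w]}$ of $L$, leading directly to the $2\times 2$ linear system with the Gram matrix.

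The two approaches are dual in a precise sense: your line $\ell$ parametrizes the pencil, and the restriction of $q$ to $\ell$ is exactly the paper's quadratic factor, so ``$\ell$ tangent to $\calQ$'' is literally the same equation as ``$q(\lambda v+\mu w)$ has a double root.'' Your approach has the advantage of bypassing the discriminant formula entirely and avoiding the (slightly glossed-over) issue in the paper of separating the genuine tangency root from the ever-present $(n-1)$-fold root coming from the reducible member of the pencil containing the hyperplane at infinity. The paper's approach, on the other hand, stays in $\bbP^n$ and makes the connection to classical pencil theory explicit. Your closedness argument to handle the exceptional locus of the birational map $\bbP^n \dashrightarrow \calQ$ is adequate, since both ``tangent at some point'' (as the image of a closed incidence correspondence) and ``Gram determinant vanishes'' are closed conditions on $([v],[w])$.
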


\begin{proof} Let $\lambda \phi_1 +\mu \phi_2$ be a one-dimensional space of quadratic forms in $V$ and $V(\lambda \phi_1 +\mu \phi_2)$ be the corresponding  pencil of quadrics in $\bbP^n$. We assume that it contains   a nonsingular quadric. Then the equation $\discr(\lambda \phi_1 +\mu \phi_2) = 0$ is a homogeneous form of degree $n+1$ whose zeros define singular quadrics in the pencil. The quadrics $V(\phi_1)$ and $V(\phi_2)$ are tangent at some point $p$ if and only if $p$ is a singular point of some  member of the pencil. It is well-known that the corresponding root $[\lambda,\mu]$ of the discriminant equation is of higher multiplicity.  If $V(\phi_1) = S([v])$ and $V(\phi_2) = S([w])$ are nonsingular complex spheres, then the pencil $V(\lambda q_1 +\mu q_2)$ corresponds to the line $\overline{x,y}$ in $\bbP^{n+1}$ spanned by the points $x$ and $y$.   A point $[\lambda v+\mu w]$ on the line defines a singular quadric if and only if 
$$D(\lambda v+\mu w) = (\lambda \alpha_{0}+\mu \beta_{0})^{n-1}q_0(\lambda v+\mu  w) = 0.$$
Our condition for quadrics $V(\phi_1)$ and $V(\phi_2)$ to be tangent to each other is that 
the equation $q_0(\lambda v+\mu  w)= 0$ has a double root. We have
$$q_0(\lambda v+\mu  w) = \lambda^2\la v,v\ra+2\lambda\mu \la v,w\ra +\mu^2\la w,w\ra.$$
Thus the condition becomes 
$$\det \begin{pmatrix} \la v,v\ra&\la v,w\ra\\
\la w,v\ra&\la w,w\ra\end{pmatrix} = 0.$$
\end{proof}

\begin{proposition} $n+1$ complex spheres $S([v_i])$ in $\bbP^n$  have a common point  if and only if $$\det \begin{pmatrix} \la w_1,w_1\ra&\ldots&\la w_1,w_{n+1}\ra\\
\la w_2,w_1\ra&\ldots&\la w_2,w_{n+1}\\
\vdots&\vdots&\vdots\\
\la w_{n+1},w_1\ra&\ldots&\la w_{n+1},w_{n+1}\ra\end{pmatrix} = 0,$$
where $w_i$ are the vectors of coordinates of the polar hyperplane of $[v_i]$. 
\end{proposition}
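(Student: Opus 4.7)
The plan is to push the question into $\bbP^{n+1}$ via the Lie--Klein correspondence used throughout this section, and then reduce it to a classical polarity statement about the fundamental quadric $\calQ$. The starting observation is that under the rational map $\bbP^n\dasharrow\bbP^{n+1}$ the sphere $S([v_i])$ becomes the hyperplane section $\calQ\cap H_i$, where $H_i=\{[x]\in\bbP^{n+1}:\la v_i,x\ra = 0\}$ is the polar hyperplane of $[v_i]$ with respect to $\calQ$. Consequently the $n+1$ spheres share a common point in $\bbP^n$ if and only if
$$\calQ\cap H_1\cap\cdots\cap H_{n+1}\neq\emptyset.$$

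Next I would analyze the linear subspace $L := H_1\cap\cdots\cap H_{n+1}\subset\bbP^{n+1}$, cut out by the $n+1$ linear equations $\la v_i,x\ra=0$. When $v_1,\ldots,v_{n+1}$ are linearly independent in $V$, $L$ reduces to a single point $[p]$, namely the pole (with respect to $\calQ$) of the hyperplane $\Lambda$ spanned by $[v_1],\ldots,[v_{n+1}]$. The key step is then the standard polarity criterion: $[p]\in\calQ$ if and only if $\Lambda$ is tangent to $\calQ$, equivalently the restriction of $\la\cdot,\cdot\ra$ to the subspace spanned by $v_1,\ldots,v_{n+1}$ is degenerate. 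In the basis $v_1,\ldots,v_{n+1}$ this restricted form has Gram matrix $(\la v_i,v_j\ra)$, so the tangency condition becomes $\det(\la v_i,v_j\ra)=0$. If instead the $v_i$ are linearly dependent, the Gram matrix is automatically singular while $L$ has projective dimension at least $1$, so $L\cap\calQ\neq\emptyset$ (any quadric hypersurface in a positive-dimensional complex projective space is non-empty) and the spheres share a common point; the equivalence survives in this degenerate case.

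Finally, to match the determinant in the statement: the covector $w_i$ defining $H_i$ equals $Mv_i$, where $M$ is the matrix of the fundamental form, so equipping $V^*$ with the dual form (matrix $M^{-1}$) one computes $\la w_i,w_j\ra = v_i^T M v_j = \la v_i,v_j\ra$ and the two Gram determinants coincide. The main obstacle I anticipate is not in the geometric content --- which is essentially the one-line polarity fact ``pole on $\calQ$ iff the span is tangent iff the restricted form is degenerate'' --- but in careful bookkeeping around the degenerate case of linearly dependent $v_i$ and around the indeterminacy locus of $\bbP^n\dasharrow\bbP^{n+1}$ along the imaginary sphere $\textsf{Q}_0$, which must be checked so as not to produce spurious common points.
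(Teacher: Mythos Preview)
Your argument is correct, but it proceeds along a different line from the paper's proof. Both start from the same translation---the spheres share a point iff the common intersection $L=H_1\cap\cdots\cap H_{n+1}$ of their polar hyperplanes meets the fundamental quadric $\calQ$---but diverge from there. The paper writes the coordinates of the intersection point explicitly as signed maximal minors $C_1,\ldots,C_{n+2}$ of the matrix $X$ with rows $w_i$, and then applies the Cauchy--Binet type identity $|A\cdot B|=\sum_I|A_I||B_I|$ with $A=XG$, $B={}^tX$ to obtain $\det(\la w_i,w_j\ra)=\pm\bigl(C_1C_{n+2}-\sum_{i=2}^{n+1}C_i^2\bigr)$, which is precisely (up to sign) the value of $q$ at that point. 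You instead recognize $L$ as the pole of the hyperplane $\Lambda=\mathrm{span}([v_1],\ldots,[v_{n+1}])$ and invoke the standard polarity fact that the pole lies on $\calQ$ iff $\Lambda$ is tangent to $\calQ$, iff the restriction of the form to $\Lambda$ is degenerate, iff $\det(\la v_i,v_j\ra)=0$. Your route is more conceptual and sidesteps the determinant identity entirely; the paper's route is more computational but yields, as a byproduct, an explicit expression for the Gram determinant as the value of the fundamental form at the intersection point.

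Two small remarks on your bookkeeping. First, your identification $\la w_i,w_j\ra=\la v_i,v_j\ra$ via the dual form is fine; in the paper's coordinates the Gram matrix $G$ of the fundamental quadric satisfies $G^2=I$, so using the same form on the $w_i$ (as the paper implicitly does) gives the same answer. Second, the indeterminacy-locus worry you flag is not really an issue: the paper's own proof, like yours, works entirely in $\bbP^{n+1}$ and only uses that the map $\bbP^n\dasharrow\calQ$ is birational, so a common point of the hyperplane sections on $\calQ$ corresponds to a common point of the spheres.
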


\begin{proof} We use the following known identity in the theory of determinants (see, for example, \cite{CAG}, Lemma 10.3.2). Let $A = (a_{ij}), B = (b_{ij})$ be two matrices of sizes $k\times m$ and $m\times k$ with $k\le m$. Let $|A_I|, |B_I|, I = (i_1,\ldots, i_k), 1\le i_1< \ldots <i_k \le m,$ be maximal minors of $A$ and $B$. Then
\beq\label{det}
|A\cdot B| = \sum_{I}|A_I||B_I|.
\eeq
Let $H_i: = \sum_{j=0}^{n+1} a_{j}^{(i)}t_j = 0$ be the polar hyperplanes of the complex spheres. We may assume that they are linearly independent, i.e. the vectors $w_i$ are linearly independent in $V$. Otherwise the determinant is obviously equal to zero. Thus the hyperplanes intersect at one point. The spheres  have a common point if and only if the intersection point of the  hyperplanes $H_i$ lies on the fundamental quadric. Let $X$ be the matrix with rows equal to vectors $w_i = (a_0^{(i)},\ldots,a_{n+1}^{(i)})$.  The projective coordinates of the intersection point are $[C_1,-C_2,\ldots,(-1)^{n+1}C_{n+2}]$,  where $C_j$ is the maximal minor obtained from $X$ by 
deleting the $j$-th column. Let 
$G$ be  the symmetric matrix defining the fundamental quadric. We take in the above  formula $A = X\cdot G, B = {}^tX$. Then the product $A\cdot B$ is equal to the left-hand side of the formula in the assertion of the proposition. The right-hand side is equal to $\pm (C_1C_{n+2}-\sum_{i=2}^{n+1} C_i^2)$. It is equal to zero if and only if the intersection point lies on the fundamental quadric.
\end{proof}

We refer to \cite{Study} for many other mutual geometrical properties  of circles expressed in terms of invariants of the orthogonal group $\Or(4)$.

\section{Real points}
We choose $V$ to be a real vector space equipped with a positive definite inner product $\la -, -\ra$. A real point in $\bbP(V)$ is represented by a nonzero vector $v\in V$. Since $\la v,v\ra > 0$, we obtain from Propositions \ref{P3.3} and \ref{P3.4}  that all real point sets $(x_1,\ldots,x_m)$ are stable points.
Another nice feature of real point sets is the criterion for vanishing of the Gram functions: $\det G(v_1,\ldots,v_k) = 0$ if and only if $v_1,\ldots,v_k$ are linear dependent vectors in $V$. 

It follows from the FFT and the SFT that the varieties $\bfO_n^m$ are defined over $\bbQ$. In particularly, we may speak about the set $\bfO_N^m(\bbR)$ of their real points.

\begin{theorem}\label{T7.1} Let $V$ be a real inner-product space. Consider the open subset $U$ of linear independent point sets $(x_1,\ldots,x_m)$ in $\bbP(V)(\bbR)$. Then the map
$$U\to \bfO_n^m(\bbR)$$
is injective.
\end{theorem}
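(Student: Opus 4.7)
The plan is to show that two linearly independent real tuples $(x_1,\ldots,x_m)$, $(y_1,\ldots,y_m)\in U$ with the same image in $\bfO_n^m(\bbR)$ must lie in a common $\PO(V)$-orbit, which is the content of the injectivity claim. I would fix real representatives $v_i,w_i\in V$ for $x_i,y_i$ and form the Gram matrices $G(v)=(\langle v_i,v_j\rangle)$ and $G(w)=(\langle w_i,w_j\rangle)$. Positive definiteness of $\langle-,-\rangle$ together with linear independence forces these matrices to be real positive definite of rank $m$; in particular every diagonal entry is strictly positive.

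By Theorem~1.1 the closed embedding $\bfO_n^m\hookrightarrow\bbS_m=\bbP(\Sym_m)/\!/T$ sends each configuration to the class of its Gram matrix, so $G(v)$ and $G(w)$ represent the same point of $\bbS_m$. A direct Hilbert--Mumford check shows $G(v)$ is $T$-stable in $\bbP(\Sym_m)$: for any nontrivial one-parameter subgroup $(r_1,\ldots,r_m)$ with $\sum r_i=0$ the weight $\min\{r_i+r_j:G(v)_{ij}\ne 0\}$ is at most $2\min_i r_i<0$, since the minimum is attained on the diagonal where $G(v)_{ii}>0$. On the stable locus $\bbS_m$ is a geometric quotient, so there exist scalars $\lambda_1,\ldots,\lambda_m\in\bbC^*$ with
\[G(w)_{ij}=\lambda_i\lambda_j\,G(v)_{ij}\qquad\text{for all }i,j.\]

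The decisive step is to descend this complex equivalence to the reals: setting $i=j$ gives $\lambda_i^2=G(w)_{ii}/G(v)_{ii}\in\bbR_{>0}$, hence $\lambda_i\in\bbR^*$. Replacing each real representative $w_i$ by $w_i/\lambda_i$ arranges $G(v)=G(w)$ as real symmetric matrices, so the assignment $v_i\mapsto w_i$ is an isometry between the $m$-dimensional real subspaces they span; extending by any isometry of the positive-definite orthogonal complements yields $g\in\Or(V)$ with $gv_i=w_i$, placing $(x_i)$ and $(y_i)$ in the same $\PO(V)$-orbit. This reality descent is the only real obstacle, and it uses positive definiteness essentially: without $G(v)_{ii}>0$ we could not conclude $\lambda_i^2>0$, and for a form of indefinite signature the $\lambda_i$ could be purely imaginary with no real rescaling of the $w_i$ bringing the two Gram matrices into agreement.
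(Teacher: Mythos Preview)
Your proof is correct, and it takes a genuinely different route from the paper's. The paper argues directly through the complex orthogonal group: having the same image in $\bfO_n^m$ means (on the stable locus) that the two point sets lie in the same $\PO(V_\bbC)$-orbit, so there is a complex orthogonal matrix $A$ carrying one set of representatives to the other; the paper then claims that the linear system $A v_i = w_i$ with real data forces $A$ to be real. Your argument instead passes through Theorem~1.1 and the toric quotient $\bbS_m$: you show the Gram matrix is $T$-stable via Hilbert--Mumford, deduce $G(w)_{ij}=\lambda_i\lambda_j G(v)_{ij}$ on the geometric quotient, and then read off $\lambda_i\in\bbR^*$ from the diagonal; only after this do you construct the real isometry by hand.

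What each approach buys: the paper's route is shorter but its key step (``there is a unique solution for $A$ and it is real'') is only literally correct when $m=n+1$; for $m<n+1$ the system underdetermines $A$, and one still has to argue that among the complex solutions there is a real orthogonal one, which amounts to the same Gram-matrix reasoning you carried out explicitly. Your route makes the role of positive definiteness completely transparent (it is exactly what forces $\lambda_i^2>0$), handles all $m\le n+1$ uniformly, and makes the stability verification explicit rather than tacit. The two arguments are morally the same---both hinge on $\langle v_i,v_i\rangle>0$ forcing the rescaling factors to be real---but yours is the more self-contained and carefully quantified version.
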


\begin{proof} To show the injectivity of the map it suffices to show that 
\beq\label{orbit1}
(g(x_1),\ldots,g(x_m)) = 
(y_1,\ldots,y_m)
\eeq
 for $g\in \PO(V_\bbC)$ implies that $(g(x_1),\ldots,g(x_m)) = 
(y_1,\ldots,y_m)$ for some $g'\in \PO(V)$.  Choose an orthonormal basis in $V$ to identify $V$ with the Euclidean real space $\bbR^{n+1}$. The transformation  $g$ is represented by a complex orthogonal matrix.  If \eqref{orbit1} holds, we can find some representatives $v_i$ and $w_i$ of points $x_i,y_i$, respectively, and a matrix $A\in \Or(V_\bbC)$ such that 
$A\cdot v_i = w_i, i = 1,\ldots,m$. This is an inhomogeneous  system of linear equations in the entries of $A$. Since the rank of the matrix $[v_1,\ldots, v_n]$ with columns $v_i$ is maximal, there is a unique solution for $A$ and it is real. Thus $g$ is represented by a transformation from $\Or(n+1,\bbR)$.
\end{proof}
 
 Let us look at the rational invariants $R_{a_1,\ldots,a_k}$. Let $\phi_{ij}, \pi-\phi_{ij},$ denote the angles between basis vectors of the  lines $x_i = \bbR v_i$.  Obviously, 
 $$R_{ij} = \cos^2 \phi_{ij}$$ is well defined and does not depend on the choice of the bases. Also,
  $$R_{ij\ldots k} = \cos \phi_{ij}\cdots \cos \phi_{ki}$$
  are well defined too. Applying the previous theorem, we see that the cyclic products of the cosines determine uniquely the orbit of a linearly independent point set. 

Finally,  let us discuss configuration spaces of real spheres. For this we have to choose $V = \bbR^{n+2}$ to be a real space with quadratic form $q_0$ of signature $(n+1,1)$ defined in \eqref{fq}. A real  sphere with non-zero radius is defined by formula \eqref{sphere3}, where the coefficients $(\alpha_0,\ldots,\alpha_{n+1})$  belong to the set 
$q_0^{-1}(\bbR_{> 0})$.   It consists of two connected components corresponding to the choice of the sign of $\alpha_0$. Choose the component $V^+$ where $\alpha_0 > 0$. The image $V^+/\bbR_{>0}$ of $V^+$ in the projective space $\bbP(V^+)$ is, by definition, the \emph{hyperbolic space} $\bbH^{n+1}$. Each point in $\bbH^{n+1}$ can be uniquely represented by a unique vector $v= (\alpha_0,\ldots,\alpha_{n+1})$ with 
$$\la v,v\ra  = \sum_{i=1}^{n}\alpha_i^2+2\alpha_0\alpha_{n+1} = 1, \quad \alpha_0 > 0.$$
Each $v\in \bbH^{n+1}$ defines the orthogonal hyperplane 
$$H_v = \{x\in \bbH^{n+1}:\la x,v\ra = 0, q(x) = 1\}.$$ 
The cosine of the angle between the hyperplanes $H_v$ and $H_w$ is defined by 
$$\cos \phi = -\la v,w\ra.$$ 
If $|\la v,w\ra|  > 1$, the hyperplanes are divergent, i.e. they do not intersect in the hyperbolic space.  In this case $\cosh (|\la v,w\ra|)$ is equal to the distance between the two divergent hyperplanes. If $\la v,w\ra = 1$, the hyperplanes are  parallel. By Proposition \ref{P3.1}, the corresponding real spheres $S(v)$ and $S(w)$ are tangent to each other. 



\end{document}